% Final accepted version dated 10.11.25.

% Final submitted version dated 11.06.25.

% Incorporating the changes made by RLK.
% The date is 08.06.25.

% This file contains the paper for the Allen Tannenbaum Special Issue of
% Pure and Applied Functional Analysis.
% The date is 26.05.25.

\documentclass[11pt]{article}

% This file contains the notation for the book on hidden Markov processes.
% Latest update on 15.08.09, rearranging, grouping, and getting rid of
% unnecessary definitions.

% Bold face Roman symbols

% Calligraphic symbols

\def\C{{\mathcal C}}

\def\F{{\mathcal F}}

\def\P{{\mathcal P}}

% Blackboard style symbols

\def\R{{\mathbb R}}

% Abbreviations for greek letters

\def\al{\alpha}
\def\de{\delta}

\def\la{\lambda}

\def\OM{\Omega}

% \def\t{\tau}

% Bold greek letters

% Bars, hats, tildes etc.

\def\Gh{\hat{G}}

% Approaching infinity

% Miscellaneous mathematical symbols

\def\ap{\rightarrow}

\def\es{\emptyset}

\def\bz{{\bf 0}}

\def\fa{\; \forall}

\def\as{\mbox{ a.s.}}

\def\nm{\Vert}

\renewcommand{\and}{\mbox{$\wedge$}}

% Some definitions for Stochastic Approximation and Reinforcement Learning papers.

% Typesetting commands

\newcommand{\bc}{\begin{center}}
\newcommand{\ec}{\end{center}}
\newcommand{\be}{\begin{equation}}
\newcommand{\ee}{\end{equation}}
\newcommand{\bd}{\begin{displaymath}}
\newcommand{\ed}{\end{displaymath}}
\newcommand{\ba}{\begin{array}}
\newcommand{\ea}{\end{array}}
\newcommand{\ben}{\begin{enumerate}}
\newcommand{\een}{\end{enumerate}}
\newcommand{\bit}{\begin{itemize}}
\newcommand{\eit}{\end{itemize}}
\newcommand{\beq}{\begin{eqnarray}}
\newcommand{\eeq}{\end{eqnarray}}
\newcommand{\btab}{\begin{tabular}}
\newcommand{\etab}{\end{tabular}}
\newcommand{\bfig}{\begin{figure}}
\newcommand{\efig}{\end{figure}}
\newcommand{\btp}{\begin{tikzpicture}}
\newcommand{\etp}{\end{tikzpicture}}

% Environment commands

% \newtheorem{corollary}{Corollary}{\bf}{\it}
% \newtheorem{definition}{Definition}{\bf}{\it}
% \newtheorem{example}{Example}{\bf}{\rm}
% \newtheorem{lemma}{Lemma}{\bf}{\it}
% \newtheorem{theorem}{Theorem}{\bf}{\it}
% \newtheorem{proposition}{Proposition}{\bf}{\it}
% \newtheorem{conjecture}{Conjecture}{\bf}{\it}
% \newtheorem{problem}{Problem}{\bf}{\rm}

% Additional commands for compressed sensing papers

% \renewcommand{\mod}{~{\rm mod}~}

% \DeclareMathOperator*{\argmin}{arg\,min}
% \DeclareMathOperator*{\argmax}{arg\,max}

% Norm symbols

\newcommand{\nmm}[1]{ \nm #1 \nm }
\newcommand{\nmeu}[1]{ \nm #1 \nm_2 }

\newcommand{\nmi}[1]{ \nm #1 \nm_\infty}

% Ordinary and partial derivatives symbols.

% Quadratic variation symbol

% Inner product symbol

% Combinatorial parameter symbol

% Legendre symbol

% Basis pursuit symbol

% \newcommand{\det}{{\rm det}}

% \newcommand{\mod}{\rm{ mod }}

\def\nmsl1{\nm_{{\rm SL1}}}

% Special notation for RL papers.

\usepackage{latexsym}
\usepackage{amsfonts}
\usepackage{amsmath}
\usepackage{amsthm}
\usepackage{geometry} 
\usepackage{xcolor} 
\usepackage{amssymb}
\usepackage{parskip}
\usepackage{graphicx}
\usepackage{enumerate}
\usepackage{hyperref}
\usepackage{xspace,colortbl}

\theoremstyle{plain}
\newtheorem{lemma}{{\bf Lemma}}

\newtheorem{example}[lemma]{{\bf Example}}
\newtheorem{remark}{{\bf Remark}}
\newtheorem{definition}[lemma]{{\bf Definition}}
\newtheorem{theorem}[lemma]{{\sc Theorem}}

\newcommand{\mycomment}[1]{}
 \setlength{\parskip}{4mm plus2mm minus2mm}
 \usepackage{xspace,colortbl}
\definecolor{blu}{rgb}{.01,.01,1}
\definecolor{dblu}{rgb}{.01,.01,.85}
\definecolor{dred}{rgb}{.7,.01,.01}
\definecolor{red}{rgb}{1,0,0}
\definecolor{grn}{rgb}{.01,.5,.01}

\newcommand{\Ind}{\mbox{{ \sf 1}}}
\newcommand{\Mp}{M^\prime}
\newcommand{\Mpp}{M^{\prime \prime}}
\usepackage{geometry} 
\geometry{ paperwidth=8in, paperheight=10in, left=1.0in, right=0.75in, top=0.5in, bottom=0.75in}
\setlength{\parindent}{5mm}

\begin{document} 

\title{Revisiting Stochastic Approximation and \\
Stochastic Gradient Descent}

% \author{Rajeeva Laxman Karandikar, Bhamidi Visweswara Rao and
% Mathukumalli Vidyasagar

\author{Rajeeva L.\ Karandikar, B.\ V.\ Rao and M.\ Vidyasagar
\thanks{RLK and BVR are with the Chennai Mathematical Institute;
MV is with the Indian Institute of Technology, Hyderabad.
Emails: rlk@cmi.ac.in, bvrao@cmi.ac.in, m.vidyasagar@iith.ac.in}
}

\maketitle

{%\cdb \bf\boldmath

\begin{abstract}

In this paper, we introduce a new approach to proving the convergence
of the Stochastic Approximation (SA)
and the Stochastic Gradient Descent (SGD) algorithms.
The new approach is based on a concept called GSLLN (Generalized Strong
Law of Large Numbers), which extends the traditional SLLN.
Using this concept, we provide sufficient conditions for convergence,
which effectively decouple the properties of the function whose zero
we are trying to find, from the properties of the measurement errors (noise
sequence). 
The new approach provides an alternative to the two widely used approaches,
namely the ODE approach and the martingale approach, and also permits
a wider class of noise signals than either of the two known approaches.
In particular, the ``noise'' or measurement error \textit{need not}
have a finite second moment, and under suitable conditions, not even
a finite mean.
By adapting this method of proof, we also derive sufficient conditions
for the convergence of zero-order SGD, wherein the stochastic gradient
is computed using $2d$ function evaluations, but
no gradient computations.
The sufficient conditions derived here are the weakest to date,
thus leading to a considerable expansion of the applicability of SA and SGD
theory.

\end{abstract}

\section{Introduction}\label{sec:Intro}

\subsection{Historical Review}\label{ssec:Review}

In this paper, we revisit the problems of Stochastic Approximation (SA),
and its close relative, Stochastic Gradient Descent (SGD).
Until now, there have been two broad approaches to addressing these problems,
namely: the ODE approach, and the martingale approach.
In this paper we introduce another approach, which might be called the
Generalized Strong Law of Large Numbers (GSLLN) approach.
By building on a classical proof of the Strong Law of Large Numbers (SLLN)
given by Kolmogorov (see for example \cite[Theorem 3.3]{Breiman92}),
we are able to establish the convergence of both SA and SGD in more
general situations than is the case at present.
Specifically, both the ODE approach and the martingale approach require
that the measurement error (``noise'') should have finite conditional
variance at each iteration.
We can dispense with that requirement, under suitable conditions.

In the remainder of this section, we formulate the two problems under study,
starting with SA.

Suppose $d$ is a fixed integer, and let $G : \R^d \ap \R^d$ be
a map such that the equation $G(x) = \bz$ has a unique solution $x^*$.
Specific hypotheses on the function $G(\cdot)$ will be added by and by.
In order to find $x^*$, suppose we can only access noise-corrupted
measurements of $G(\cdot)$.
In a seminal paper \cite{Robbins-Monro51}, Robbins and Monro studied
the scalar case ($d = 1$), and proposed the following
iterative procedure, which they called ``stochastic approximation.''
However, to avoid repetition, we state the procedure for the case of
arbitrary $d$.
Choose any $x_0 \in \R^d$.
At iteration $n$, suppose we measure $G(x_n) + W_{n+1}$, where
$W_{n+1}$ is the measurement error, or ``noise.''
Then we define
\be\label{I10}
X_{n+1}=X_n - \beta_n (G(X_n)+W_{n+1}) , \fa n \geq 0 .
\ee
In the above, the number $\beta_n \in (0,\infty)$ is called the``step size.''
Note that in the Machine Learning literature, $\beta_n$ is also called
the ``learning rate.''
In \cite{Robbins-Monro51}, the authors analyzed the moments of the iterations
$X_n$, under some very strong assumptions on $G(\cdot)$, which have
been relaxed by subsequent researchers.
They also assumed that the noise sequence $\{ W_n \}$ is i.i.d., has zero mean,
and had finite variance.
With these assumptions,
They showed that $X_n \ap x^*$ in probability, provided the two conditions
\be\label{I0}
  \sum_n \beta_n=\infty, \quad \sum_n \beta_n^2<\infty
\ee
are satisfied.
Indeed, the conditions \eqref{I0} are known as Robbins-Monro conditions.
Subsequent relaxations of the assumptions on both $G(\cdot)$ and on
$\{ W_n \}$ are discussed a little later.

Now we discuss the SGD method.
Let $d$ be a fixed integer, and let $F: \R^d \ap \R$ be a $\C^1$ function.
The objective is to find a stationary point $x^*$ of $F(\cdot)$, that is,
a solution to $G(x) = \bz$, where $G(x) := \nabla F(x)$.
If one has access to noise-corrupted measurements of $G(\cdot)$, then
this problem can also be addressed via stochastic approximation, as
described above.
However, if one can measure only $F(\cdot)$ corrupted by noise,
alternate approaches are needed, as described next.
These are often referred to as ``zeroth-order'' or ``gradient-free'' methods.

The first approach was proposed by \cite{Kief-Wolf-AOMS52}, almost
immediately after the publication of \cite{Robbins-Monro51}.
The authors study the scalar case $d=1$, and
assume that the equation $G(Y) = 0$ has a unique solution $x^*$.
If $Y_n$ is the current guess for the solution, then Kiefer and Wolfowitz 
construct as ``approximate gradient'' at $Y_n$ via
\be\label{I11}
\Gh(Y_n) = \frac{1}{2 c_n} [ (F(Y_n + c_n) + \Mp_{n+1} )
- (F(Y_n - c_n) + \Mpp_{n+1})] ,
\ee
where $\{ \Mp_n \}$ and $\{ \Mpp_n \}$ are i.i.d.\ processes that are
also independent of each other.
Moreover, they assumed that both processes have zero mean and finite variance.
In this definition, the number $c_n$ is called the \textbf{increment.}
By writing the above as
\bd
\Gh(Y_n) = \frac{F(Y_n + c_n) - F(Y_n - c_n)}{2 c_n} 
+ \frac{\Mp_{n+1} - \Mpp_{n+1}}{2 c_n} ,
\ed
it can be observed that the first term equals $G(Y_n) + o(c_n)$.
Hence, in order to be effective, the increment $c_n$ has to approach zero.
As a result, the \textit{second} term has variance that is \textit{unbounded}
as $c_n \ap 0$.
The iterative scheme proposed in \cite{Kief-Wolf-AOMS52} is
\be\label{I12}
Y_{n+1} = Y_n - \eta_n \Gh(Y_n) ,
\ee
where, as before, $\eta_n$ is the step size.
They showed that, under suitable conditions on $G(\cdot)$, $Y_n$ converges in
probability to $x^*$, provided
\be\label{I13}
c_n \ap 0 , \quad \sum_{n=0}^\infty (\eta_n / c_n)^2 < \infty ,
\quad \sum_{n=0}^\infty \eta_n c_n < \infty ,
\quad \sum_{n=0}^\infty \eta_n = \infty .
\ee
Subsequently, Blum \cite{Blum54b,Blum54} extended the approach to higher dimensions, and also proved that the iterations converge almost surely,
provided \eqref{I13} hold.
For this reason, the conditions \eqref{I13} are referred to as the
Kiefer-Wolfowitz-Blum (KWB) conditions.

Until now, the two most commonly used approaches to analyzing the
behavior of SA and SGD are the ODE approach and the martingale approach.
The core idea that underlies the ODE approach is that the stochastic process
generated by the iterations \eqref{I10} ``converges'' in a precise
sense to the trajecftories of the \textit{deterministic} equation
\be\label{I14}
\dot{X} = - G(X) .
\ee
Note that if $G(x^*) = \bz$, then $x^*$ is an equilibrium of \eqref{I14}.
Thus, if $x^*$ is a \textit{globally attractive} equilibrium of \eqref{I14},
then under additional assumptions, it can be shown that $X_n \ap x^*$
almost surely.
The ODE approach was pioneered in \cite{Kushner-JMAA77}, in which
it is shown only that $X_n \ap x^*$ in probability.
The approach is developed at book-length in \cite{Kushner-Clark78}.
The conclusion is strengthened to almost sure convergence by Ljung;
see \cite{Ljung-TAC77a,Ljung78}.
A very readable summary of these earlier papers is given in
\cite{Meti-Priou84}.
The approach is extended to infinite-dimensional Hilbert spaces in
\cite{Kushner-Shwartz85}.
There are several book-length treatments of the ODE approach, namely
\cite{Kushner-Clark78,BMP90,Kushner-Yin97,Kushner-Yin03,Kushner-Clark12}.
A survey of the papers up to the time of its publication can be found
in \cite{Lai03}.
The most recent treatments of the ODE approach can be found in 
\cite{Borkar08,Borkar22}.

In all of the above papers and books, a central assumption is that
the iterations of \eqref{I10} are bounded almost surely.
The convergence proofs themselves do not offer any way to verify this
assumption.
A seminal paper in this direction is \cite{Borkar-Meyn00}, in which the
boundedness of the iterations is a \textit{conclusion and not a hypothesis}.

The second popular approach is based on the fundamental fact that
a nonnegative supermartingale converges almost surely.
Gladyshev \cite{Gladyshev65} was the first to 
prove the convergence of SA using this approach.
Note that Gladyshev's paper in 1965 predates Kushner's paper in 1977.
However, Gladyshev's paper apparently attracted relatively little
notice at the time.
Gladyshev's paper was the first to obtain the boundedness of
the iterations as a \textit{conclusion and not as a hypothesis.}
Specifically, he demonstrated that the square summability of $\beta_n$
in \eqref{I10} suffices to establish the boundedness of the iterations.
If, in addition, the sum of $\beta_n$ diverges, then the convergence of $X_n$
follows.
Robbins and Siegmund \cite{Robb-Sieg71} extended the approach of Gladyshev.
They called their stochastic processes ``almost supermartingales,''
and this is now standard terminology.
One advantage of the Robbins-Siegmund approach over the Gladyshev approach
is that the former is readily applicable to the case where the step sizes
$\beta_n$ are random, which is not the case with Gladyshev's approach.
Random step sizes arise naturally in ``Block'' stochastic approximation,
wherein, at each iteration, \textit{some but not necessarily all}
components of $X_n$ are updated.
The book \cite{BMP90} contains a discussion of several algorithms for SA,
including the almost-supermartingale approach.

The ability to provide separate proofs of boundedness and convergence
is characteristic of the martingale approach.
On the other hand, the ODE approach can handle the case where \eqref{I14}
has multiple equilibria, which is difficult in the martingale approach.

Some other relevant references that deal with these approaches are
\cite{BMP90,Bertsekas19,Borkar98,Borkar08,Dvoretzky56,Jaakkola-et-al94,RLK-BVR24,MV-RLK-BASA-arxiv21,MV-RLK-SGD-JOTA24,Milz23,Schmett60,Tsi-Van-TAC97}. 

%Some of these approaches rely on Martingale techniques (which in turn relies on various $L^2$ estimates of the approximating sequence) while some embed the problem to a differential equation in continuous time.

\subsection{Contributions of the Paper}\label{ssec:Contrib}

Now we describe the contributions of the paper.
In one sentence, we introduce a third approach, as an alternative to the
ODE and martingale approaches.
Let $\{ \F_n \}$ be any filtration to which the noise process
$\{ W_n \}$ is adapted, and suppose further that $G_0$ is
measurable with respect to $\F_0$.
To analyze SA or SGD, most authors impose assumptions on the
conditional expectation and conditional variance of $W_{n+1}$ with
respect to $\F_n$.
Implicitly this assumes that the noise $W_n$ is square-integrable
(has a finite second moment).
In what follows, we do away with this assumption.

Specifically, we propose the concept of a stochastic process $\{ W_n\}$
satisfying the GSLLN (Generalized Strong Law of Large Numbers)
with respect to a specific ``rate,'' which in the present setting
is the step size sequence $\{ \beta_n \}$ or $\{ \eta_n \}$ depending
on the algorithm used.
By assuming that the noise process $\{ W_n \}$ satisfies a GSLLN with
respect to the rate, and that the function $G(\cdot)$ satisfies a few
additional assumptions, we can prove that both SA and SGD converge.
This generalization is applicable to the case where the noise $W_n$
may not have a finite variance, and in some cases, not even a mean.

Here are some specific examples of convergence results that can be obtained
using the present approach.
\ben
\item The GSLLN property holds if $\beta_n = 1/n$ and $\{ W_n \}$
satisfies any one of the following three properties:
\bit
\item $\{ W_n \}$ is an i.i.d.\ process with zero mean, and each $W_n$ is
absolutely integrable, that is, $E[ \nmm{W_1} ] < \infty$.
\item $W_n$'s are pairwise independent (but not identically distributed),
with $\lim_n E(W_n)=0$,  and there exists an $\al \in (1,2]$ such that
\bd
\sup_n[ E(\nmm{W_n}^\alpha)]<\infty .
\ed
\item $\{ W_n\}$ is a martingale difference sequence with 
\bd
\sup_n [ E(\nmm{W_n}^\alpha)]<\infty
\ed
for some $\alpha>1$.
\eit
\item If the noise sequence $\{ W_n \}$ satisfies any of the above three
conditions, then the Stochastic Approximation algorithm 
converges almost surely for the step size sequence $\beta_n = 1/n$, while the
Stochastic Gradient Descent algorithm converges almost surely for
the choices
\bd
\eta_n=\frac{1}{n\log(1+n)}, c_n=\frac{1}{\log(1+n)} .
\ed
\item
In the special case where  $\{W_n\}$ are i.i.d.\  with a 
symmetric distribution (that is, $W_1$ and $-W_1$ have the same distribution),
GSLLN holds if there exists a $\de \in (0,1]$ such that
\bd
E \left[ \frac{|W_{1,i}|}{{\log(1+|W_{1,i}|)^\delta}} \right] <\infty, \fa i .
\ed
Hence the Stochastic Approximation converges with 
\bd
\beta_n=\frac{1}{n\log(1+n)^\delta},\;\;\forall n\ge 1 .
\ed
Similarly, if there exists a $\de < 1$ such that
\bd
E \left[ \frac{|W_{1,i}|}{{\log(1+|W_{1,i}|)^\delta}} \right] <\infty 
\ed
then the Stochastic Gradient Descent algorithm will converge with
\bd
\eta_n=\frac{1}{n\log(1+n)} , c_n=\frac{1}{\log(1+n)^{1-\delta}} .
\ed
\een

% \ben
% \item If $\{ W_n \}$ is i.i.d.\ with zero mean and
% finite first moment (but not necessarily a finite second moment),
% then Stochastic Approximation converges with $\beta_n=\frac{1}{n}$,
% and Stochastic Gradient Descent converges with $\eta_n=\frac{1}{n\log(1+n)}$,
% $c_n=\frac{1}{\log(1+n)}$.
% \item Suppose that $\{ W_n \}$ is i.i.d.\ with zero mean and $E[ \nmm{W_n}^\al ]
% < \infty$ for some $\al < 2$ (which automatically implies that
% $E [ \nmm{W_n }^\theta ] < \infty$ whenever $1 \leq \theta < \al$).
% Then GSLLN holds if $\beta_n \leq C n^{-1/\al}$.
% \een

\section{Main Results}\label{sec:Main}

In this section, we state the main results of the paper.
The proofs are deferred to Section \ref{sec:Proofs}.
We begin by introducing the GSLLN (Generalized Strong Law of Large Numbers)
property for a stochastic process $\{ W_n \}$.
Then we state that, if the GSLLN property holds, then both SA and SGD
converge, under suitable additional conditions.
This establishes the centrality of the GSLLN property.
Finally, we state several sufficient conditions for the GSLLN property
to hold.
 
\subsection{Generalized Strong Law of Large Numbers: Definition
}\label{ssec:GSLLN}

In this section, we introduce the key notion of GSLLN
(Generalized Strong Law of Large Numbers),
and then provide several sufficient conditions for ensuring that a
given process $\{ W_n \}$ satisfies the GSLLN with respect to a given
rate sequence $\{ \beta_n \}$.
The proofs follow the steps involved in Kolmogorov's proof of 
the Strong law of large numbers in the case of i.i.d. $L^1$ random variables;
see \cite{Breiman92,Chung51,Goswami-Rao25}.

To set up the definition,
let $(\F_n)_{n\ge 0}$ be a filtration on a probability space $(\Omega,\F,\P)$,
and let $\{ \beta_n \}$ be a sequence of positive numbers that satisfies
\be\label{U8}
\lim_{n \ap  \infty}\beta_n=0,\;\;\;\sum_{n=0}^\infty\beta_n=\infty.
\ee

\begin{definition}\label{def:GSLLN}
Let $( \F_n )$ be a filtration on a probability space $( \OM,\F,P)$.
Let $\{\beta_n:n\ge 0\}$ satisfy \eqref{U8}.
An $\R^d$ valued $(\F_n)_{n\ge 0}$  adapted sequence of Random variables
$\{W_n:n\ge 1\}$ is said to satisfy the \textbf{Generalized Strong Law of
Large Numbers} (GSLLN) at the \textbf{rate} $\{\beta_n:n\ge 0\}$
with respect to the filtration $(\F_n)_{n\ge 0}$, if  for all
uniformly bounded  $(\F_n)_{n\ge 0}$  adapted random variables  $\{\zeta_n :
n\ge 0\}$ and for all $t\in (0,\infty)$
the sequence $\{S_n:n\ge 0\}$ defined by $S_0=0$ and 
\be\label{U7}
S_{n+1}=(1-t\beta_n )S_n+t\beta_n \zeta_n W_{n+1},\;\;n\ge 0
\ee
converges to 0 almost surely.
\end{definition}

If there is a fixed filtration in consideration, we will drop it from
the statement and say it as $\{W_n:n\ge 1\}$ satisfies the
GSLLN at the {\em rate} $\{\beta_n:n\ge 0\}$.
Clearly, the set of  $(\F_n)_{n\ge 0}$ adapted sequences of random
variables that satisfy the GSLLN at  the {\em rate} $\{\beta_n:n\ge 0\}$
w.r.t.\ the filtration $(\F_n)_{n\ge 0}$ is a linear space.

Given a sequence of random variables  $\{W_n:n\ge 1\}$ on $(\Omega,\F,\P)$, let $(\F^W)$ denote the filtration $\F^W_0=\{\Phi,\Omega\}$, and for $n\ge 1$, $\F^W_n=\sigma(W_1,\ldots , W_n)$.
%It should be noted that for any fixed $t$,  $\exists n^*$ such that $t\beta_n<1$ for $n\ge n^*$. It is easy to see that if $\{U_n:n\ge 1\}$, $\{V_n:n\ge 1\}$ are  $\{\F_n:n\ge 1\}$ adapted sequences that satisfy GSLLN at the {\em rate} $\{\beta_n:n\ge 0\}$, then so does $Y_n=U_n+V_n$.

We will see later that, in order to establish the convergence of both
Stochastic Approximation and Stochastic Gradient Descent, it is enough that
the noise sequence $\{ W_n \}$ satisfies the GSLLN at the rate $\{ \beta_n \}$,
together with some standard assumptions on the map $G$.
% (under suitable assumptions on $F$ and $G$ respectively) reduces to validity of Generalized Strong Law of Large Numbers for the noise (or error) sequences. 

\subsection{Stochastic Approximation - Main Result}\label{ssec:SA}

Let $d\ge 1$ be a fixed integer and  $G:\R^d\mapsto \R^d$ be a function
with a unique zero, call it $x^*$.
Thus $G(x^*)=0$ and $G(x)\neq 0$ for all $x\neq x^*$.
We consider a generalization of \eqref{I10}, in which the noise
$W_{n+1}$ is multiplied by a coefficient that can depend on the past history
of the iterations.
The set up is as follows: For $n\ge 0$, let 
$\la_n:(\R^d)^{n+1}\mapsto\R$ be such that, for some constant $C_1$
and some norm $\nmm{\cdot}$ on $\R^d$, $\la_n$ satisfies
\be\label{U5}
|\la_n( u_0, u_1,\ldots, u_n)|\le C_1(1+\max_{0\le k\le n}\nmm{ u_k}).
\ee
Note that, while $\nmm{\cdot}$ can be anything, it is essential to use
the maximum of $\nmm{u_0} , \cdots , \nmm{u_n}$ in \eqref{U5}.
This avoids the norm becoming larger and larger as $n$ increases.
Now the iterative scheme is defined as follows:
For $x_0\in\R^d$ fixed, $X_0=x_0$ and for $n\ge 0$ let
\be\label{U6}
X_{n+1}=X_n-\beta_n \bigl(G(X_n)+\la_n(X_0,\ldots,X_n)W_{n+1}\bigr).
\ee
% Here and in what follows, $\nmm{x}$ is \textit{any} norm on $\R^d$. 
It is assumed that the function $G:\R^d\mapsto \R^d$ satisfies 
the following condition.
% which is more general than $G(\cdot)$ being a contraction:
There exist constants $b > 0$, $\rho \in (0,1)$ and
an $x^* \in \R^d$ such that
\be\label{U2}
\nmm{x-x^*- \frac{1}{b}G(x)}\le \rho\nmm{x-x^*} \fa x\in\R^d.
\ee
Our aim is to
establish conditions on the step size sequence $\{\beta_n:n\ge 0\}$
and noise sequence $\{W_n:n\ge 1\}$ that ensure the almost sure 
convergence of $\{X_n:n\ge 0\}$ defined by \eqref{U6} to $x^*$.
Though all norms on $\R^d$ are equivalent, the fact that \eqref{U2}
holds for one particular does not imply that \eqref{U2} holds
for some other norm.
Thus the assumption is that \textit{there exists} some suitable norm
$\nmm{\cdot}$ on $\R^d$ such that \eqref{U2} holds, and it does not matter
which norm it is.
Note that the constants $b$ and $\rho$ appearing in \eqref{U2}
are used only in the proofs, and do not appear in the iteration formula.
% On $\R^d$, all norms are equiivalent and we can consider \eqref{U2}
% with $l^\infty$ norm as well. 

Now we present two examples of \eqref{U2}.
\begin{example}\label{exam1}
Suppose $H: \R^d \ap \R^d$ is a contraction with respect to some norm
$\nmm{\cdot}$.
Thus there exists a constant $\rho \in [0,1)$ such that
\bd
\nmm{H(x) - H(y)} \leq \rho \nmm{x - y} , \fa x , y \in \R^d .
\ed
Let $x^*$ denote the unique fixed point of $H$, and define 
$G(x) = x - H(x)$.
Then clearly $x^*$ is the unique solution of $G(x) = \bz$.
Moreover, it is easy to see that \eqref{U2} is satisfied with
$b = 1$.
This is because
\begin{align*}
\nmm{ x - x^* - (x - H(x)) } &= \nmm{ H(x) - x^* } \\
&= \nmm{H(x) - H(x^*) } \leq \rho \nmm{x - x^*} .
\end{align*}
\end{example}

\begin{example}\label{exam2}
Suppose $F: \R^d \ap \R$ is $\C^2$ and strictly convex.
Specifically, let $G$ and $M$ denote $\nabla F$ and $\nabla^2 F$
respectively.
The assumption is that there exist constants $0 < g \leq h < \infty$
such that
\bd
g I_d \leq M(x) \leq h I_d , \fa x \in \R^d ,
\ed
where for symmetric matrices $A$ and $B$, $A \leq B$ is equivalent to
$B-A$ being positive semidefinite.
The above condition is also equivalent to the statement that the
largest eigenvalue of $M(x)$ is bounded above by $h < \infty$,
and the smallest eigenvalue of $M(x)$ is bounded below by $g > 0$.
With these assumptions, it is shown that \eqref{U8} holds with
$\nmm{\cdot} = \nmeu{\cdot}$, and
\bd
b = \frac{1}{1+g+h} , \quad \rho = \frac{1+h}{1+g+h} < 1 .
\ed
The hypotheses guarantee that $F(\cdot)$ has a unique global minimizer
$x^*$, which is also the unique solution to $G(x) = \bz$.
Next, let $x \in \R^d$ be arbitrary, and observe that $G(x^*) = \bz$.
Then
\begin{align*}
G(x) &= \int_0^1 \frac{d}{d \la} G(x^* + \la (x-x^*) ) \; d \la \\
&= \int_0^1 M(x^* + \la (x-x^*) ) \cdot (x - x^*) \; d \la .
\end{align*}
Therefore
\bd
x-x^* - b G(x) = \int_0^1 [ I - b M(x^* + \la (x-x^*)) ] \cdot (x - x^*) \; d \la .
\ed
By assumption, the spectrum of the symmetric matrix $M(x^* + \la (x-x^*))$
lies in the interval $[g,h]$.
Hence the spectrum of
$I - b M(x^* + \la (x-x^*))$ lies in the interval
$[1 - \frac{h}{1+g+h} , 1 - \frac{g}{1+g+h}]$.
Since both numbers are positive, the spectral norm (which is the same
as the $\ell_2$-induced norm) satisfies
\bd
\nmm{I - b M(x^* + \la (x-x^*)) }_{2 \ap 2} \leq 1 - \frac{g}{1+g+h} = \rho .
\ed
Therefore
\bd
\nmm{x-x^* - b G(x)} \le \int_0^1 \rho \nmeu{x - x^*} \; d \la 
= \rho \nmeu{x - x^*} .
\ed

\end{example}

To motivate the concept of the GSLLN,
let us study a special case, when $G(x)=tx$, $t>0$ and $\{\zeta_n:n\ge 0\}$
are bounded $\F_n=\sigma(X_0,\ldots,X_n)$ adapted random variables.
Let $U_0=0$ and for $n\ge 0$, let
\bd
U_{n+1}=(1-t\beta_n )U_n+t\beta_n \zeta_nW_{n+1}, \fa n\ge 0.
\ed
Suppose $\beta_n = 1/(n+1)$, $t=1$ and $\zeta_n=1$ for all $n\ge 0$; then
$U_n$ is just the average of $W_1$ through $W_n$.
% then $U_{n}=\frac{W_1+W_2+\ldots +W_n}{n}$ and thus 
Therefore, if each $W_n$ has zero mean, then the almost sure
convergence of $U_n$  to 0 is equivalent to $\{W_n:n\ge 1\}$
satisfying  the strong law of large numbers (SLLN).
% (with mean 0). 
It can be seen that if $\{W_n:n\ge 0\}$ are  i.i.d.\
with  mean 0 and finite variance, then \eqref{U8} must hold in order for
$U_n$ to converge to $0$.
% to  conclude that $U_n$ converges to 0, we must have 
% that \eqref{U8} holds.

With this intuition,
our main result on the convergence of the  Stochastic Approximation
algorithm is the following: 

\begin{theorem}\label{Main-SA}
Suppose $G:\R^d\mapsto \R^d$ satisfies \eqref{U2}.
Let $\la_n:(\R^d)^{n+1}\mapsto\R: n\ge 0$  satisfy \eqref{U5}.
Let $\{\beta_n:n\ge 0\}$ satisfy \eqref{U8} and let 
$\{W_n:n\ge 1\}$ be a sequence of $\R^d$ valued random variables.
Suppose  $\{W_n:n\ge 1\}$ satisfies  the Generalized Strong Law of
Large Numbers (GSLLN) at the rate  $\{\beta_n:n\ge 0\}$
w.r.t.\ the filtration $(\F^W)$. 
Let $x_0\in\R^d$, and define $\{X_n:n\ge 0\}$ by \eqref{U6}. 
Then 
\be\label{U20}
\nmm{X_{n+1}-x^*} \ap  0 \as
\ee
 \end{theorem}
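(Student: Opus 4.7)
The plan is to decompose $Y_n := X_n - x^*$ into a ``signal'' component governed by the contraction property \eqref{U2} and a ``noise'' component controlled by the GSLLN hypothesis. Using \eqref{U2}, write $(1/b) G(X_n) = Y_n - Z_n$ with $\nmm{Z_n} \leq \rho\nmm{Y_n}$, so that \eqref{U6} becomes
\[
Y_{n+1} = (1 - b\beta_n) Y_n + b\beta_n Z_n - \beta_n \lambda_n W_{n+1},
\]
writing $\lambda_n$ for $\lambda_n(X_0,\ldots,X_n)$. Set $R_0 = Y_0$, $S_0 = 0$, and
\[
R_{n+1} = (1 - b\beta_n) R_n + b\beta_n Z_n, \qquad S_{n+1} = (1 - b\beta_n) S_n - \beta_n \lambda_n W_{n+1},
\]
so that $Y_n = R_n + S_n$ for all $n$.

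The $S_n$-recursion matches \eqref{U7} with $t = b$ and $\zeta_n = -\lambda_n/b$. Pretend momentarily that $\lambda_n$ is uniformly bounded; then the GSLLN hypothesis yields $S_n \to 0$ almost surely. Using $\nmm{Z_n} \leq \rho(\nmm{R_n} + \nmm{S_n})$, the $R_n$-recursion gives, for $n$ large enough that $b\beta_n \leq 1$,
\[
\nmm{R_{n+1}} \leq (1 - b(1-\rho)\beta_n)\nmm{R_n} + b\beta_n\rho \nmm{S_n},
\]
and a standard contraction-with-vanishing-input argument (exploiting $\beta_n \to 0$ and $\sum_n \beta_n = \infty$) delivers $R_n \to 0$, whence $Y_n = R_n + S_n \to 0$.

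The real obstacle is that \eqref{U5} only gives linear growth, not a uniform bound, so the GSLLN cannot be applied directly to the $S_n$-recursion. I would handle this by localization: for each $K \in \Nbb$, set
\[
\bar\lambda_n^{(K)} := \lambda_n(X_0,\ldots,X_n) \cdot \mathbf{1}\bigl\{ \max_{0\leq k\leq n} \nmm{X_k - x^*} \leq K \bigr\},
\]
which is $\F_n^W$-measurable and bounded by $C_1(1 + \nmm{x^*} + K)$. Define $T_n^{(K)}$ by the $S_n$-recursion with $\bar\lambda_n^{(K)}$ in place of $\lambda_n$; the GSLLN then yields $T_n^{(K)} \to 0$ almost surely. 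Letting $\tau_K := \inf\{n : \nmm{X_n - x^*} > K\}$, on the event $\{\tau_K = \infty\}$ the truncation is inactive so $T_n^{(K)} = S_n$ for every $n$, and the conditional argument of the second paragraph gives $X_n \to x^*$ there.

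What remains, and where I expect the main difficulty to lie, is showing $P\bigl(\bigcup_K \{\tau_K = \infty\}\bigr) = 1$, equivalently the almost-sure boundedness of $\{X_n\}$. To this end I would introduce the auxiliary process $\bar X_n^{(K)}$ defined by the iteration \eqref{U6} but with $\bar\lambda_n^{(K)}$ applied to its own history; by induction, $\bar X_n^{(K)}$ agrees with $X_n$ for all $n \leq \tau_K$, and applying the decomposition of the first two paragraphs to $\bar X_n^{(K)}$ itself delivers $\bar X_n^{(K)} \to x^*$ a.s.\ along with a pathwise bound
\[
\sup_n \nmm{\bar X_n^{(K)} - x^*} \leq \nmm{Y_0} + \bar\sigma_K/(1-\rho), \quad \bar\sigma_K := \sup_n \nmm{T_n^{(K)}} < \infty \text{ a.s.}
\]
The core of the remaining work is a quantitative comparison showing $P\bigl(\bar\sigma_K \leq (1-\rho)(K - \nmm{Y_0})\bigr) \to 1$ as $K \to \infty$; on that event the pathwise bound forces $\tau_K = \infty$, and combined with the monotonicity of $\{\tau_K = \infty\}$ in $K$ this completes the proof.
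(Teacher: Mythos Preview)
Your decomposition $Y_n = R_n + S_n$ and the contraction-with-vanishing-input argument are sound, and you correctly isolated the real issue: $\lambda_n$ is only linearly bounded in $\max_{k\le n}\nmm{X_k}$, so the GSLLN cannot be applied directly.

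The gap is in your boundedness step. The ``quantitative comparison'' you flag as remaining work is not delivered by the GSLLN hypothesis and, as stated, need not hold. Your truncated coefficient $\bar\lambda_n^{(K)}$ is bounded only by $C_1(1+\nmm{x^*}+K)$, so the noise process $T_n^{(K)}$ (or the corresponding one for $\bar X^{(K)}$) is, up to the indicator, $(1+\nmm{x^*}+K)$ times a process whose GSLLN-coefficient is bounded by $C_1/b$. The GSLLN gives $T_n^{(K)}\to 0$ a.s.\ but says nothing about $\sup_n\nmm{T_n^{(K)}}$; since the implicit scaling is linear in $K$, there is no reason $\bar\sigma_K/K$ should tend to $0$ in probability, which is exactly what you need for $P(\bar\sigma_K\le(1-\rho)(K-\nmm{Y_0}))\to1$. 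There is also a mismatch between the $T^{(K)}$ you define (truncation via the history of $X$) and the noise term entering the pathwise bound for $\bar X^{(K)}$ (truncation via its own history); these agree only up to $\tau_K$, which is again circular.

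The paper avoids localization entirely by a normalization trick. With $\phi_n=1+\max_{k\le n}\nmm{X_k}$ and $\zeta_n=-\xi_n/(b\phi_n)$, one has $|\zeta_n|\le C_1/b$ uniformly, so the GSLLN applies \emph{once} to
\[
S_{n+1}=(1-b\beta_n)S_n+b\beta_n\,\zeta_n W_{n+1},
\]
yielding $S_n\to0$ a.s. The original noise contribution in the recursion for $X_{n+1}-x^*$ is $b\beta_n\phi_n\zeta_nW_{n+1}$, and an Abel-summation (summation-by-parts) argument using the monotonicity of $\phi_n$ shows that the accumulated noise term satisfies $\nmm{L_{m,n}}\le\phi_n\epsilon\le(1+\gamma_n+\nmm{x^*})\epsilon$ for all $n\ge m\ge n_\epsilon$, where $\gamma_n=\max_{k\le n}\nmm{X_k-x^*}$. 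This inequality is the engine: choosing $\epsilon$ small relative to $1-\rho$, an induction on $n$ gives $\gamma_n\le C(1+\gamma_{n_\epsilon}+\nmm{x^*})$ for all $n$, i.e.\ boundedness, and then your contraction argument (or the paper's $\theta^*\le\rho\theta^*$ version) finishes. The point is that dividing by $\phi_n$ makes the GSLLN applicable with a \emph{single} bounded sequence $\zeta_n$, and the factor $\phi_n$ is put back in a way that couples the noise bound to the current size of the iterate rather than to an externally imposed level $K$.
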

It should be noted that conditions on $G$ and those on the noise
sequence $\{W_n:n\ge 1\}$  are not related to each other.
Of course, the conditions on the rate $\{ \beta_n \}$
and the errors $\{ W_n \}$ are interrelated.

\subsection{Stochastic Gradient Descent - Main Result}\label{ssec:SGD}

In this section, we study the convergence of the Stochastic Gradient
Descent (SGD) algorithm for a $\C^2$-function $F: \R^d \ap \R$.
% described in \eqref{I12}, where the
% approximate gradient $\Gh$ is defined as in \eqref{I11}.
Most (though not all) papers in this area assume some sort of strong convexity
on $F(\cdot)$, as in Example \ref{exam2}.
However, here we replace that by a different assumption.
Specifically, it is assumed that the equation
$G(x) = \nabla F(x) = \bz$ has a unique solution, denoted by $x^*$,
which is a global minimizer.
For this purpose, we use a generalized version of 
the algorithm described in \eqref{I12}, as described next.

As in the case of the SA algorithm in Section \ref{ssec:SA},
let $\la_n:(\R^d)^{n+1}\mapsto\R: n\ge 0$
denote a ``multiplier'' of the measurement noise terms that depends on
the past iterations.
Specifically, it is assumed that there exists a constant  $C_1$ such that
% be such that (for some constant $C_1$)
\be\label{V2}
|\la_n( u_0, u_1,\ldots, u_n)|\le C_1(1+\max_{0\le k\le n}\nmi{u_k}).
\ee
Let $\{\Mp_{m},\Mpp_{m}: m\ge 1\}$ be sequences of $\R^d$ valued
random variables, representing observational errors
appearing in \eqref{V4} below).
Let $\{\psi_{n,i}:n\ge 0, \,1\le i\le d\}$ be  $\{0,1\}$ valued
$(\F^{\Mp,\Mpp})$ adapted random variables.
We will also assume that $\forall i\in \{1,2,\ldots, d\}$
\be\label{V3x}
\sum_{n=0}^\infty\psi_{n,i}\eta_n=\infty \as
\ee
where, as described in Section \ref{ssec:Review},  $\eta_n$ is the step size and $c_n$ that appears below is the increment.
Now we define the iterative procedure.
Choose some $y_0 \in \R^d$, and define $Y_0=y_0$ .
For $n\ge 0$, let  $\xi_n=\la_n(Y_0,\ldots,Y_n)$ and update $Y_n$ as folows:
\be\label{V4}
Y_{n+1}=Y_n- 
\sum_i\psi_{n,i}\eta_n\Bigl(\frac{\bigl(F(Y_n+e_ic_n)+\xi_n
\Mp_{n+1,i}\bigr)-\bigl(F(Y_n-e_ic_n)
+ \xi_n \Mpp_{n+1,i}\bigr)}{2c_n}\Bigr)e_i
\ee
where $e_i$ is the unit vector in the $i^{th}$ direction.

To facilitate the statement of the theorem, we introduce two assumptions:
\ben
\item[(A1).]
The sequences $\{\eta_n:n\ge 0\}\subseteq (0,1)$ and
$\{c_n:n\ge 0\}\subseteq (0,1)$ satisfy
\be\label{V3}
\lim_{n \ap \infty}\eta_n=0, \quad \lim_{n \ap \infty}c_n=0.
\ee
\item[(A2).] It is assumed that there exist constants
$\rho\in (0,1),\, a>0,\,b>0$ such that, for all 
sufficiently small $c$, the following holds:
\be\label{V1} 
|x_i-x_i^*- \frac{1}{2cb}(F(x+ce_i)-F(x-ce_i))|\le
\rho \nmi{ x - x^*} + a c,
\fa 1\le i\le d.
\ee
\een
In the rest of the paper,
the phrase ``for all $n,i$'' will mean ``for all integers $n\ge 0$
and $i\in\{1,2,\ldots,d\}$.''

With these assumptions and notation in place, we can state our main result
on the convergence of the SGD algorithm \eqref{V4}.

\begin{theorem}\label{Main-SGD}
Suppose $F:\R^d\mapsto \R$ satisfies \eqref{V1}.
Let $\la_n:(\R^d)^{n+1}\mapsto\R: n\ge 0$  satisfy \eqref{V2}.
Let $\{\Mp_{n},\Mpp_{n}:n\ge 1\}$ be $\R^d$ valued random variables.
Let  $\{\eta_n, c_n,\psi_{n,i}: n\ge 0, 1\le i\le d\}$ satisfy \eqref{V3}, \eqref{V3x}.
Suppose $\psi_{\cdot,i}$ are adapted to $(\F^{\Mp,\Mpp})$.
Let $\{Y_n:n\ge 0\}$ be defined by \eqref{V4}.
Let $W_{n+1}=\frac{1}{2c_n}(\Mp_{n+1}-\Mpp_{n+1})$ 
for $n\ge 0$.
Suppose  the error sequences $\{W_n:n\ge 1\}$ satisfies Generalized
Strong Law of Large Numbers at the rate  $\{\eta_n:n\ge 0\}$ w.r.t.\
$(\F^{\Mp,\Mpp})$.
Then 
\be\label{W20}
\nmm{Y_{n+1}-x^*} \ap  0 \as
\ee
 \end{theorem}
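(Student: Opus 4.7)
The plan is to reduce the SGD recursion \eqref{V4} to something structurally identical to the SA recursion in Theorem~\ref{Main-SA}, then bootstrap from the GSLLN hypothesis. First I would use assumption (A2) to linearize the symmetric finite difference. Writing $Z_n = Y_n - x^*$ and letting $n$ be large enough that $c_n$ is in the range where \eqref{V1} applies, we have, coordinate-wise,
\[
\frac{F(Y_n + c_n e_i) - F(Y_n - c_n e_i)}{2 c_n} \;=\; b\,Z_{n,i} \;-\; b\,\Theta_{n,i},
\qquad |\Theta_{n,i}| \le \rho \nmi{Z_n} + a c_n .
\]
Substituting into \eqref{V4} gives, for each coordinate $i$,
\[
Z_{n+1,i} = (1 - b\psi_{n,i}\eta_n)\,Z_{n,i} + b\psi_{n,i}\eta_n\,\Theta_{n,i} \;-\; \psi_{n,i}\eta_n\,\xi_n\,W_{n+1,i} .
\]
This is exactly of the shape to which the GSLLN (Definition~\ref{def:GSLLN}) applies, with rate $\eta_n$, decay parameter $t=b$, and multiplier $\zeta_{n} = \psi_{n,i}\xi_n/b$; the extra bias $a c_n$ vanishes since $c_n\to 0$.

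Second, I would establish that $\{Y_n\}$ is almost surely bounded, which is the necessary precondition for invoking the GSLLN (because Definition~\ref{def:GSLLN} demands a \emph{uniformly} bounded $\zeta_n$, and by \eqref{V2} the bound on $\xi_n = \lambda_n(Y_0,\ldots,Y_n)$ is controlled by $\max_{k\le n}\nmi{Y_k}$). The plan is a localization argument: for each integer $K$, let $\tau_K = \inf\{n : \nmi{Y_n} > K\}$, run a truncated iteration in which $\xi_n$ is replaced by $\xi_n \mathbf{1}_{\{n<\tau_K\}}$ so that the truncated noise multiplier is uniformly bounded, apply the GSLLN to the truncated process to get that the noise contribution tends to zero a.s., and then use the contractive structure $(1 - b\psi_{n,i}\eta_n)$ combined with the bound on $|\Theta_{n,i}|$ and the geometric series identity
\[
\sum_{k=0}^{n} b\psi_{k,i}\eta_k \prod_{j=k+1}^{n}(1 - b\psi_{j,i}\eta_j) \;\le\; 1
\]
to show that $\nmi{Z_n}$ cannot escape to infinity. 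A countable union over $K$ then shows $\{Y_n\}$ is a.s.\ bounded.

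Third, once boundedness is in hand, $\xi_n$ is uniformly bounded (pathwise by a random constant), and I would split $Z_{n,i} = U_{n,i} + R_{n,i}$ with
\[
U_{n+1,i} = (1 - b\psi_{n,i}\eta_n)U_{n,i} - \psi_{n,i}\eta_n\,\xi_n W_{n+1,i}, \qquad U_{0,i}=0,
\]
and $R_{n+1,i} = (1 - b\psi_{n,i}\eta_n)R_{n,i} + b\psi_{n,i}\eta_n \Theta_{n,i}$, with $R_{0,i}=Z_{0,i}$. For $U_{n,i}$, apply Definition~\ref{def:GSLLN} with $\zeta_n = \psi_{n,i}\xi_n/b$ (bounded, $(\F^{M',M''}_n)$-adapted because $\psi_{n,i}$ is so adapted and $\xi_n$ depends on $Y_0,\ldots,Y_n$ which are $(\F^{M',M''}_n)$-measurable), concluding $U_{n,i}\to 0$ a.s. Here \eqref{V3x} plays the role of ensuring the effective rate $\{\psi_{n,i}\eta_n\}$ still diverges, which is what drives the contraction.

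Fourth and finally, for $R_{n,i}$ I would pass to $\limsup$: using $|\Theta_{n,i}| \le \rho \nmi{Z_n} + ac_n$ and the fact that the geometric weights $b\psi_{k,i}\eta_k \prod_{j=k+1}^{n}(1 - b\psi_{j,i}\eta_j)$ sum to at most $1$ and concentrate on recent $k$ (because $\sum \psi_{\cdot,i}\eta_\cdot = \infty$), together with $c_n \to 0$, one gets
\[
\limsup_n |R_{n,i}| \;\le\; \rho \limsup_n \nmi{Z_n},
\]
so combining with $U_{n,i}\to 0$ yields $\limsup_n \nmi{Z_n} \le \rho\limsup_n \nmi{Z_n}$, and since $\rho<1$ and the $\limsup$ is finite by step two, this forces $\nmi{Z_n}\to 0$ a.s., which is \eqref{W20}. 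The hard part will be step two: extracting boundedness of $\{Y_n\}$ purely from the one-step bound \eqref{V1} and the GSLLN, without any integrability of $W_n$, since the usual martingale or ODE boundedness tools are unavailable in this generality; the localization-and-geometric-sum argument outlined above is the delicate piece.
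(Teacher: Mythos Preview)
Your overall skeleton—linearize via (A2), reduce to a GSLLN-driven recursion, prove boundedness, then finish with a $\limsup \le \rho\,\limsup$ argument—matches the paper's, and your steps 1 and 4 are essentially what the paper does. The gap is in steps 2–3, and it is precisely the one you flagged as ``the hard part.''

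The concrete problem is this: Definition~\ref{def:GSLLN} requires the multiplier $\zeta_n$ to be \emph{uniformly} bounded, i.e., by a deterministic constant. In step 3 you propose $\zeta_n = \psi_{n,i}\xi_n/b$ once $\{Y_n\}$ is known to be pathwise bounded; but a random bound on $\xi_n$ is not a uniform bound, so the GSLLN hypothesis does not apply to this $\zeta_n$. Your step-2 localization (truncate $\xi_n$ at the stopping time $\tau_K$, apply GSLLN, then ``take a countable union over $K$'') does not escape this: for each $K$ the truncated noise process goes to zero only after some random threshold $N^{(K)}_\epsilon$ that depends on $K$, so the resulting bound on the truncated iterates also depends on $K$; and the union $\bigcup_K\{\tau_K=\infty\}$ is exactly the event $\{\sup_n\|Y_n\|<\infty\}$ you are trying to prove has full measure, so the argument is circular as written.

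The paper breaks this circularity with a normalization trick you are missing. It sets $\phi_n = 1+\max_{k\le n}\nmi{Y_k}$ and takes $\zeta_n = -b^{-1}\phi_n^{-1}\xi_n$, which by \eqref{V2} satisfies $|\zeta_n|\le C_1/b$ \emph{deterministically}. Thus GSLLN applies directly, yielding a single auxiliary process $S_{n,i}\to 0$ a.s.\ \emph{before} any boundedness of $\{Y_n\}$ is established. The price is an extra factor $\phi_n$ multiplying the noise term in the $Y$-recursion; the paper controls the accumulated noise $L_{m,n,i}$ by an Abel-summation (summation-by-parts on the increasing process $\phi_k$) to obtain $|L_{m,n,i}|\le (1+\gamma_n+\nmi{x^*})\epsilon$, where $\gamma_n = \max_{k\le n}\nmi{Y_k-x^*}$. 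This self-referential bound then feeds into an explicit induction showing $\gamma_n \le (1+3\epsilon)(1+\gamma_{n_\epsilon}+\nmi{x^*})$, which gives boundedness. With boundedness in hand, your $\limsup$ argument in step 4 is exactly how the paper finishes.
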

 
\subsection{Sufficient Conditions for GSLLN}\label{ssec:Suff}

The theorems in the two preceding subsections show the centrality of the
GSLLN property.
In this subsection, we present two sets of sufficient conditions for
a process to satisfy the GSLLN property.
The proofs are deferred to Section \ref{sec:Proofs}.

% Here is first of the two important results on GSLLN :
\begin{theorem}\label{GSLLN}
% Let $\{W_n: n\ge 1\}$ be an $\R^d$ valued $(\F_n)$ adapted sequence such that
An $(F_n)$-adapted sequence $\{ W_n \} _{n \geq 0}$ satisfies
the GSLLN at the rate $\{ \beta_n \}$ w.r.t.\ the filtration  $(\F_n)_{n\ge 0}$ if the following properties hold:
for each $n\ge 1$, $i\in\{1,2,\ldots,d\}$,
there exist sets $A_{n,i}\in\F_n$ such that, with
\be\label{G0}
V_{n+1,i}= E[W_{n+1,i}\Ind_{A_{n+1,i}}\mid \F_n]
\ee
we have 
\be\label{G1}
\sum_{n=1}^\infty P(A_{n,i}^c)<\infty,\;\;\;\forall i.
\ee
\be\label{G2}
\sum_{n=0}^\infty\beta_n  E[|V_{n+1,i}|]\;\;\text{converges}\;\;\;\forall i.
\ee
\be\label{G3}
\sum_{n=0}^\infty \beta_n^2 E[|W_{n+1,i}|^2\Ind_{A_{n+1,i}}]<\infty.
\ee
% $\{W_n:n\ge 1\}$ satisfies Generalized Strong Law of Large Numbers at the rate  $\{\beta_n:n\ge 0\}$.
 \end{theorem}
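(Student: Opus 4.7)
The plan is to follow the classical truncation--centering--Kronecker scheme from Kolmogorov's proof of the SLLN, adapted to the averaging-style recursion \eqref{U7}. Since the conclusion is coordinatewise, I fix an index $i$ and suppress it. Because $\beta_n\to 0$, I may discard finitely many initial terms (their contribution is erased by $\prod_j(1-t\beta_j)\to 0$, which holds since $\sum\beta_j=\infty$) and assume $t\beta_n\in(0,1)$ throughout. Unrolling \eqref{U7} then gives $S_{n+1}=\sum_{k=0}^n p_{k,n}\zeta_k W_{k+1}$ with positive weights $p_{k,n}:=t\beta_k w_{k,n}$, $w_{k,n}:=\prod_{j=k+1}^n(1-t\beta_j)$. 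Four properties of $p_{k,n}$ drive everything that follows: the telescoping identity $p_{k,n}=w_{k,n}-w_{k-1,n}$, the pointwise bound $p_{k,n}\le t\beta_k$, the row-sum identity $\sum_{k=0}^n p_{k,n}=1-\prod_{j=0}^n(1-t\beta_j)\to 1$, and the column limit $p_{k,n}\to 0$ as $n\to\infty$ for each fixed $k$.

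Next I truncate and center. Set $\tilde W_{n+1}:=W_{n+1}\Ind_{A_{n+1}}$ and $M_{n+1}:=\tilde W_{n+1}-V_{n+1}$, so that $M_{n+1}$ is an $\F_{n+1}$-martingale difference. By \eqref{G1} and Borel--Cantelli, $\Ind_{A_{n+1}^c}=0$ for all $n$ past some random $N(\omega)$; hence the iterate driven by $W_{n+1}\Ind_{A_{n+1}^c}$ has only finitely many nonzero inputs and is thereafter merely multiplied by factors $(1-t\beta_j)$ whose infinite product is $0$, so it tends to $0$ a.s. This reduces the problem to showing that the iterates $T_n$ and $R_n$, driven respectively by $\zeta_k V_{k+1}$ and $\zeta_k M_{k+1}$, both tend to $0$ a.s.

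For $T_n$: hypothesis \eqref{G2} together with uniform boundedness of $\zeta_k$ gives $\sum_k\beta_k|\zeta_k V_{k+1}|<\infty$ a.s. A Kronecker-type argument for the recursion then gives $T_n\to 0$: for $\epsilon>0$, split at an index $K$ beyond which $\sum_{k\ge K}\beta_k|\zeta_k V_{k+1}|<\epsilon$; the tail is bounded by $t\epsilon$ using $p_{k,n}\le t\beta_k$, while the finite head is a linear combination of the $\zeta_k V_{k+1}$ with weights $p_{k,n}\to 0$. For $R_n$: condition \eqref{G3} combined with orthogonality of martingale differences and $|\zeta_k|\le C$ makes the martingale $N_n:=\sum_{k=0}^{n-1}\beta_k\zeta_k M_{k+1}$ $L^2$-bounded, hence a.s.\ convergent to some finite $N_\infty$. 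An Abel summation of $R_{n+1}/t=\sum_{k=0}^n w_{k,n}(N_{k+1}-N_k)$, using $w_{k-1,n}-w_{k,n}=-p_{k,n}$ and $N_0=0$, produces the identity $R_{n+1}=tN_{n+1}-t\sum_{k=1}^n p_{k,n}N_k$. The array $\{p_{k,n}\}$ satisfies the Silverman--Toeplitz conditions listed above, so the weighted average $\sum_{k=1}^n p_{k,n}N_k$ converges a.s.\ to $N_\infty$, giving $R_n\to 0$ a.s.

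Adding the three pieces yields $S_n\to 0$ a.s.\ coordinatewise, and hence in the chosen norm on $\R^d$. The main technical hurdle is the martingale step: translating a.s.\ convergence of $N_n$ into $R_n\to 0$ is not a direct application of Kronecker's lemma, because $R_n$ is not a simple weighted partial sum. It is precisely the telescoping identity $p_{k,n}=w_{k,n}-w_{k-1,n}$ that makes the Abel summation close cleanly and lets the Toeplitz-summability properties of $\{p_{k,n}\}$ cancel the boundary term $tN_{n+1}$.
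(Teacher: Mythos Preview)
Your proposal is correct and follows essentially the same route as the paper: the identical three-way decomposition $W_{n+1}=W_{n+1}\Ind_{A_{n+1}^c}+V_{n+1}+M_{n+1}$, Borel--Cantelli for the first piece, summability plus a Kronecker-type argument for the second, and $L^2$-martingale convergence plus the same Abel-summation/Toeplitz mechanism for the third. The only cosmetic difference is that the paper packages your inline Kronecker/Toeplitz computations into a standalone deterministic lemma (Lemma~\ref{deterministic}, parts (i) and (ii)) and then invokes it three times, whereas you re-derive that content within the proof.
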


The proof is based on the fact that the hypotheses imply that
\be\label{G7}
\sum_{n=0}^\infty \beta_nW_{n+1,i}\;\;\text{converges}\;\;\;\forall i.
\ee

Next we present an alternate version, which is very similar to the proof
of Kolmogorov's Strong Law of Large numbers;
see \cite[Section 3.7]{Breiman92} for further details.

\begin{theorem}\label{GSLLN2}
% Let $\{W_n: n\ge 1\}$ be an $\R^d$ valued $(\F_n)$ adapted sequence such that for each $n\ge 1$, $i\in\{1,2,\ldots,d\}\;$
An $(F_n)$-adapted sequence $\{ W_n \} _{n \geq 0}$ satisfies
the GSLLN at the rate $\{ \beta_n \}$ w.r.t.\ the filtration  $(\F_n)_{n\ge 0}$  if the following properties hold:
for each $n\ge 1$, $i\in\{1,2,\ldots,d\}$, there exist sets
$A_{n,i}\in\F_n$ such that \eqref{G1} and \eqref{G3} hold; 
further, with $V_{n+1,i}$ defined by \eqref{G0}, we have
\be\label{G2a}
\lim_{n \ap \infty} V_{n+1,i}=0,\;\;\;\text{almost surely}\;\;\;\forall i
\ee
% Then $\{W_n:n\ge 1\}$ satisfies Generalized Strong Law of Large Numbers at the rate  $\{\beta_n:n\ge 0\}$.
\end{theorem}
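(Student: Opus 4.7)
The plan is to follow Kolmogorov's classical truncation proof of the SLLN, adapting it to the exponential-averaging recursion \eqref{U7}. I would work one coordinate at a time: fix $i \in \{1,\ldots,d\}$, an $(\F_n)$-adapted process $\{\zeta_n\}$ with $K := \sup_n |\zeta_n| < \infty$, and $t > 0$; since $\beta_n \to 0$, one may assume $t\beta_n \in (0,1)$ for all $n$ after discarding finitely many initial terms. The core decomposition is
\[
W_{n+1,i} = \underbrace{W_{n+1,i}\Ind_{A_{n+1,i}^c}}_{U_{n+1,i}} + \underbrace{W_{n+1,i}\Ind_{A_{n+1,i}} - V_{n+1,i}}_{D_{n+1,i}} + V_{n+1,i},
\]
where $\{D_{n+1,i}\}$ is an $(\F_n)$-martingale difference by the very definition of $V_{n+1,i}$. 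Linearity of \eqref{U7} in the driving noise splits the $i$-th component of $S_n$ as $S_n^U + S_n^D + S_n^V$, each obeying \eqref{U7} with the corresponding summand as its noise.

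Next I would dispose of the three pieces in turn. For $S_n^U$: condition \eqref{G1} and Borel--Cantelli give $U_{n+1,i} = 0$ for all $n \geq N(\omega)$ on an a.s.\ event, after which the recursion collapses to $S_{n+1}^U = (1-t\beta_n)S_n^U$; the product $\prod_{k \geq N}(1-t\beta_k)$ vanishes because $\sum \beta_k = \infty$, so $S_n^U \to 0$ a.s. For $S_n^D$, form the martingale $M_n := \sum_{k<n} \beta_k \zeta_k D_{k+1,i}$; its predictable quadratic variation is dominated by $K^2 \sum_k \beta_k^2 E[|W_{k+1,i}|^2 \Ind_{A_{k+1,i}} \mid \F_k]$, whose expectation is finite by \eqref{G3}. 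Hence $M_n$ is an $L^2$-bounded martingale and converges a.s.\ to some $M_\infty$.

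The crux, where the new hypothesis \eqref{G2a} enters, is $S_n^V$: the input $\zeta_n V_{n+1,i}$ only tends to $0$ a.s., rather than giving a convergent weighted sum as in Theorem \ref{GSLLN}. Unrolling the recursion yields
\[
S_{n+1}^V = \sum_{k=0}^{n} t\beta_k \zeta_k V_{k+1,i}\prod_{j=k+1}^{n}(1-t\beta_j),
\]
and the telescoping identity $t\sum_{k=0}^{n}\beta_k\prod_{j=k+1}^{n}(1-t\beta_j) = 1 - \prod_{j=0}^{n}(1-t\beta_j) \leq 1$ makes the nonnegative coefficients behave like a probability weighting, with each fixed-$k$ coefficient tending to $0$ as $n\to\infty$. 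A Toeplitz-type split of the sum at a large but fixed index then forces $S_n^V \to 0$ a.s. The same weighted array, combined with an Abel summation based on $\sum_{j \geq k} \beta_j \zeta_j D_{j+1,i} = M_\infty - M_{k-1} \to 0$ a.s., will convert the convergence of $M_n$ into $S_n^D \to 0$ a.s.

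The hard part will be formulating the Toeplitz/Kronecker-type step cleanly enough to cover both regimes within a single lemma: inputs that merely tend to zero (the $V$ term) and inputs whose weighted partial sums converge (the $D$ term). The telescoping bound $t\sum_{k\le n} \beta_k \prod_{j=k+1}^n (1-t\beta_j) \le 1$ is the single quantitative fact driving both cases; once in hand, the remaining verification across coordinates $i = 1,\ldots,d$ is routine bookkeeping.
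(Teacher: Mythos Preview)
Your proposal is correct and follows essentially the same route as the paper: the three-term truncation $W_{n+1,i}=U_{n+1,i}+D_{n+1,i}+V_{n+1,i}$ is exactly the paper's decomposition (their $R_{n+1,i}$ is your $D_{n+1,i}$), and your Toeplitz/Kronecker step is precisely the content of the paper's Lemma~\ref{deterministic}, whose parts (i) and (ii) handle the ``input tends to zero'' case (for $S^U$ and $S^V$) and the ``weighted partial sums converge'' case (for $S^D$) respectively. The telescoping bound you isolate is the paper's identity \eqref{B2}/\eqref{B3}.
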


Our next result  lists several different conditions, 
each of which ensures  that the GSLLN  holds.
Moreover, under suitable assumptions on $G(\cdot)$, one can combine
each of these assumptions to deduce the convergence of Stochastic Approximation.
 
\begin{theorem}\label{GSLLN-details}
Suppose $\{\beta_n:n\ge 0\}$ and $\{W_n: n\ge 1\}$ satisfy any 
one of the  conditions $(H1)$--$(H5)$ given below, in addition to
$\{\beta_n:n\ge 0\}$ satisfying  \eqref{U8}. 
Then  $\{W_n:n\ge 1\}$ satisfies Generalized Strong Law of Large Numbers
at the rate  $\{\beta_n:n\ge 0\}$ w.r.t.\ the filtration  $(\F^W_n)_{n\ge 0}$.
% As a consequence, Theorem \ref{Main-SA} also holds.
%the conclusion in Theorem \ref{GSLLN} and as a consequence, those in  Theorems \ref{Main-SA} and \ref{Main-SGD} are also valid.
 \begin{itemize}
\item [$(H1)$] $\{W_n: n\ge 1\}$ are i.i.d. with $ E(\nmm{W_1}^2)<\infty$, $ E(W_1)=0$  and 
\be\label
 {L1} \sum_{n=0}^\infty \beta_n^2<\infty.
 \ee 
\item [$(H2)$] $\{W_n: n\ge 1\}$ are i.i.d.\ with $ E(\nmm{W_1}^\alpha)<\infty$
for some $1\le \alpha <2$, $ E(W_1)=0$, and  there exists a finite constant $D$
such that
\be\label{L2}
 \beta_n\le Dn^{-\frac{1}{\alpha}},\;\;\forall n\ge 1.
\ee 
\item  [$(H3)$] This hypothesis consists of several conditions taken together.
(i) $\{W_n: n\ge 1\}$ are i.i.d.\ with a symmetric distribution
(that is,  $W_1$ and $-W_1$ have the same distribtuion).
 (ii) There exists a constant $0<\delta\le 1$ such that
\be\label{L2a}
 E[\frac{|W_{1,i}|}{({\log(1+|W_{1,i}|)})^\delta}]<\infty , \fa i ,
\ee
(iii) there exists a finite constant D such that $\beta_0\le D$, and
\be\label {L3} \beta_n\le \frac{D}{n({\log(1+n)})^\delta},\;\;\forall n\ge 1.
\ee 
\item  [$(H4)$] Again, this hypothesis consists of several conditions.
(i) $\{W_n: n\ge 1\}$ are independent, (ii) $\mu_n= E(W_n)$ converges to zero,
 (ii) there exists  $\al \in (1,2]$ such that
 $\nu_{\alpha,n} := E[\nmm{W_n}^\al]<\infty$
 for all $n\ge 1$, (iii)   and
\be \label {L4}
 \sum_{n=0}^\infty \beta_n^\alpha\nu_{\alpha,n+1}
= \sum_{n=0}^\infty \beta_n^\al E(\nmm{W_n}^\alpha) <\infty.
\ee 
\item  [$(H5)$] (i) $\{W_n: n\ge 1\}$ is a martingale difference sequence, 
(ii) there exists an $\alpha \in (1,2]$ such that
 $\nu_{\alpha,n}= E(\nmm{W_n}^\alpha)<\infty$ for all $n\ge 1$, and (iii)
\be \label {L5}
 \sum_{n=0}^\infty \beta_n^\alpha\nu_{\alpha,n+1}<\infty.
\ee 
\end{itemize}
\end{theorem}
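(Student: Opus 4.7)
The plan is to verify, case by case, that each hypothesis (H1)--(H5) implies the sufficient conditions of Theorem \ref{GSLLN} or Theorem \ref{GSLLN2}, by choosing an appropriate family of truncation sets $A_{n,i}=\{|W_{n+1,i}|\le c_n\}$ matched to the decay of $\beta_n$ and the tail of $W_n$. In each case, the verification of \eqref{G1} and \eqref{G3} will reduce by Markov or Fubini to the assumed moment condition on $W_n$, while \eqref{G2} or \eqref{G2a} will follow from zero mean (H1), dominated convergence (H2), symmetry (H3), explicit centering (H4), or the martingale-difference property (H5).

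The first two iid cases are straightforward. For (H1), take $A_{n,i}=\Omega$: the zero-mean assumption gives $V_{n+1,i}=0$, so \eqref{G2} holds trivially, and \eqref{G3} reduces to $\sum\beta_n^2E[\nmm{W_1}^2]<\infty$. For (H2), take $c_n=(n+1)^{1/\alpha}$. The i.i.d.\ identity $\sum_n P(|W_{1,i}|^\alpha>n)\le 1+E|W_{1,i}|^\alpha$ verifies \eqref{G1}, dominated convergence yields $V_{n+1,i}\to E[W_{1,i}]=0$ for \eqref{G2a}, and a dyadic rearrangement
\[
\sum_n\beta_n^2E[W_{1,i}^2\Ind_{|W_{1,i}|\le c_n}]=\sum_kE[W_{1,i}^2\Ind_{k<|W_{1,i}|^\alpha\le k+1}]\sum_{n\ge k}\beta_n^2,
\]
combined with $\sum_{n\ge k}\beta_n^2\le Ck^{1-2/\alpha}$ from $\beta_n\le Dn^{-1/\alpha}$, reduces \eqref{G3} to $\sum_k(k+1)P(k<|W_{1,i}|^\alpha\le k+1)\le 1+E|W_{1,i}|^\alpha$ by summation by parts.

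For (H4) and (H5), take $c_n=1/\beta_n$. The pointwise interpolation $|y|^2\Ind_{|y|\le 1/\beta_n}\le\beta_n^{-(2-\alpha)}|y|^\alpha$ together with Markov's inequality $P(|W_{n+1,i}|>1/\beta_n)\le\beta_n^\alpha E|W_{n+1,i}|^\alpha$ jointly reduce both \eqref{G1} and \eqref{G3} to $\sum\beta_n^\alpha\nu_{\alpha,n+1}<\infty$, which is exactly \eqref{L4} or \eqref{L5}. In (H5), the martingale-difference property gives $V_{n+1,i}=-E[W_{n+1,i}\Ind_{A_{n+1,i}^c}\mid\F_n]$, and the conditional Markov estimate yields $E|V_{n+1,i}|\le\beta_n^{\alpha-1}\nu_{\alpha,n+1}$, so \eqref{G2} also reduces to the same sum. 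For (H4), first decompose $W_n=(W_n-\mu_n)+\mu_n$ and apply GSLLN to the centered part (which has mean zero) as in (H5); the deterministic drift produces a recursion $B_{n+1}=(1-t\beta_n)B_n+t\beta_n\zeta_n\mu_{n+1}$ that tends to $0$ by a standard averaging lemma, since $\mu_n\to 0$, $\beta_n\to 0$, and $\sum\beta_n=\infty$.

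The main obstacle is (H3). Here the natural choice $c_n\asymp n\log^\delta(1+n)\asymp 1/\beta_n$ makes symmetry do the work for \eqref{G2}, since then $V_{n+1,i}=0$ identically, but \eqref{G3} requires genuine care: the easy Markov bound $E[|W|^2\Ind_{|W|\le c}]\le Cc\log^\delta(1+c)$ leads to the divergent sum $\sum 1/n$, so no simple uniform estimate on $E[|W|^2\Ind_{|W|\le c_n}]$ will suffice. Instead, the plan is to swap the orders of summation and integration: writing
\[
E[|W_{n+1,i}|^2\Ind_{|W_{n+1,i}|\le c_n}]=\int_0^{c_n}2t\,P(t<|W_{1,i}|\le c_n)\,dt,
\]
and summing first over $n$ gives $\sum_{n:c_n\ge t}\beta_n^2=O(1/(t\log^\delta(1+t)))$, so that the entire double sum reduces to $\int 2tP(|W_{1,i}|>t)/(t\log^\delta(1+t))\,dt=O(E[\phi(|W_{1,i}|)])$, where $\phi(x)=x/\log^\delta(1+x)$; this is finite by \eqref{L2a}. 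A similar Fubini swap delivers \eqref{G1} from the same moment condition.
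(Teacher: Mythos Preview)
Your strategy coincides with the paper's for (H1), (H2), (H4), and (H5): the paper likewise takes $A_{n,i}=\Omega$ under (H1) and under (H5) with $\alpha=2$, takes $A_{n+1,i}=\{|W_{n+1,i}|^\alpha\le n\}$ under (H2) with the same Fubini rearrangement you sketch, takes $A_{n+1,i}=\{\beta_n|W_{n+1,i}|\le 1\}$ under (H5) with $\alpha<2$, and reduces (H4) to (H5) by centering and then invoking part (i) of Lemma \ref{deterministic} for the drift $\mu_n$.

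For (H3) your approach is different from the paper's, and as written it has a gap when $\delta=1$. Your tail bound $\sum_{n:c_n\ge t}\beta_n^2=O\bigl(1/(t\log^\delta(1+t))\bigr)$ is valid for all $t>0$, but if you feed it into the Fubini integral for every $t$, the resulting bound
\[
\int_0^\infty \frac{2P(|W_{1,i}|>t)}{\log^\delta(1+t)}\,dt
\]
diverges at the origin when $\delta=1$, since $\log(1+t)\sim t$ there; so the displayed claim that this integral is $O\bigl(E[\phi(|W_{1,i}|)]\bigr)$ is false in that case. The repair is easy: for $t\le c_1$ bound $\sum_{n:c_n\ge t}\beta_n^2$ by the finite constant $\sum_n\beta_n^2$, so that the contribution $\int_0^{c_1}2t\,P(|W_{1,i}|>t)\,dt$ is trivially finite, and only invoke the decaying bound for $t>c_1$. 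After this split your argument goes through, since $\int_1^x dt/\log^\delta(1+t)\sim x/\log^\delta(1+x)$ as $x\to\infty$.

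The paper sidesteps this by a different truncation: it takes $A_{n,i}=\{\phi(|W_{n,i}|)\le n\}$ with $\phi(x)=x/\log^\delta(1+x)$, uses Lemma \ref{Log} to deduce $\log(1+|W_{1,i}|)\le 3\log(1+n)$ on $A_{n,i}$, and thereby reduces \eqref{G3} to the already-established (H2) computation with $\alpha=1$ applied to the integrable variable $\phi(|W_{1,i}|)$. This modular reduction avoids the tail-integral computation entirely, at the cost of the auxiliary Lemma \ref{Log}; your direct Fubini route is self-contained but needs the split above.
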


% Here are similar set of conditions for GSLLN  that lead to Stochastic Gradient Descent

Our next result  lists a different set of conditions compared to
those in Theorem \ref{GSLLN-details},
each of which ensures  that the GSLLN  holds.
Moreover, under suitable assumptions on $G(\cdot)$, one can combine
each of these assumptions to deduce the convergence of Stochastic 
Gradient Descent.

 \begin{theorem}\label{GSLLN-SGD}
Let $\{\Mp_{n},\Mpp_{n}:n\ge 1\}$ be $\R^d$ valued
random variables, and let  $\{\eta_n, c_n: n\ge 0\}$ satisfy \eqref{V3}.
Define $M_n=\frac{1}{2}(\Mp_{n} - \Mpp_{n})$, and
$W_{n+1}=\frac{1}{c_n}M_{n+1}$.

Suppose that one of the  conditions $(K1)$--$(K5)$ given below holds. Then GSLLN holds for $\{W_{n+1}:n\ge 0\}$ at the rate $\{\eta_n:n\ge 0\}$ w.r.t.\
$(\F^{\Mp,\Mpp})$,  and as a consequence, 
 Stochastic Gradient Descent Theorem \ref{Main-SGD} holds (if in addition, \eqref{V1}, \eqref{V2}, \eqref{V3},  \eqref{V3x} hold).
 
 \begin{itemize}
\item [$(K1)$]   Suppose $\{\Mp_{n},\Mpp_{n}:n\ge 1\}$ are i.i.d. with $ E(\nmm{\Mp_1}^2)<\infty$  and 
\be \label {N1} \sum_{n=0}^\infty \frac{\eta_n^2}{c_n^2}<\infty.
\ee 
 \item  [$(K2)$]Suppose $\{\Mp_{n},\Mpp_{n}:n\ge 1\}$ are i.i.d.,   for some $\alpha$, $1\le \alpha< 2$, $ E(\nmm{\Mp_1}^\alpha)<\infty$, and  for some finite constant D, $\beta_0\le D$ and
\be \label {N2}  \frac{\eta_n}{c_n}\le Dn^{-\frac{1}{\alpha}},\;\;\forall n\ge 1.
\ee 
\item  [$(K3)$] Suppose $\{\Mp_{n},\Mpp_{n}:n\ge 1\}$ are i.i.d.,  for some $0<\delta\le 1$, $ E[\frac{|\Mp_{1,i}|}{(\log(1+|\Mp_{1,i}|))^\delta}]<\infty$ for each $i$ and  for some finite constant D, $\beta_0\le D$ and
\be \label {N3}  \frac{\eta_n}{c_n}\le \frac{D}{n(\log(1+n))^\delta},\;\;\forall n\ge 1.
\ee
\item  [$(K4)$] $\{M_n: n\ge 1\}$ are independent, $\nu_n= E(M_n)$ is such that $\frac{\nu_{n+1}}{c_n}$ converges to zero, for some $\alpha$,  $1<\alpha\le 2$, $\nu_{\alpha,n}= E(\nmm{M_n}^\alpha)<\infty$ for all $n\ge 1$    and
\be \label {N4} \sum_{n=0}^\infty \frac{\eta_n^\alpha}{c_n^\alpha}\nu_{\alpha,n+1}<\infty.
\ee 
\item  [$(K5)$] $\{M_n: n\ge 1\}$ is a martingale difference sequence, for some $\alpha$, $1<\alpha\le 2$,  $\nu_{\alpha,n}= E(\nmm{M_n}^\alpha)<\infty$  for all $n\ge 1$    and
\be \label {N5} \sum_{n=0}^\infty \frac{\eta_n^\alpha}{c_n^\alpha}\nu_{\alpha,n+1}<\infty.
\ee 
 \end{itemize}
 \end{theorem}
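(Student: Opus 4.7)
The plan is to verify, for each of the five hypotheses (K1)--(K5), that $\{W_{n+1}\}$ satisfies the sufficient conditions of Theorem~\ref{GSLLN} (or, in the non-i.i.d.\ cases, of Theorem~\ref{GSLLN2}) at rate $\{\eta_n\}$ with respect to the filtration $(\F^{\Mp,\Mpp})$. Once this is done, the GSLLN for $\{W_{n+1}\}$ is in hand, and the convergence of SGD follows by a direct invocation of Theorem~\ref{Main-SGD}, using the standing assumptions \eqref{V1}, \eqref{V2}, \eqref{V3} and \eqref{V3x}.

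The first step is a simple but crucial reduction. Since $c_n$ is deterministic and $W_{n+1}=M_{n+1}/c_n$, hypotheses \eqref{G1}, \eqref{G2}, \eqref{G3} (respectively \eqref{G2a}) for $\{W_{n+1}\}$ translate into moment conditions on $\{M_{n+1}\}$ in which the rate $\{\eta_n\}$ is effectively replaced by $\{\eta_n/c_n\}$; for example, \eqref{G3} becomes $\sum_n \eta_n^2 c_n^{-2}\, E[|M_{n+1,i}|^2 \Ind_{A_{n+1,i}}]<\infty$. Conditions (K1)--(K5) have been engineered precisely so that, after this translation, each puts $\{M_{n+1}\}$ with ``effective rate'' $\{\eta_n/c_n\}$ into the setting of the correspondingly numbered hypothesis of Theorem~\ref{GSLLN-details}. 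Moreover, in the i.i.d.\ cases (K1)--(K3), the independence and common distribution of $\Mp_n$ and $\Mpp_n$ force $M_n$ and $-M_n$ to have the same law, which kills several centering terms.

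Case by case: for (K1), take $A_{n,i}=\OM$, so $V_{n+1,i}=E[M_{n+1,i}]/c_n=0$; conditions \eqref{G1} and \eqref{G2} are trivial, and \eqref{G3} reduces to $E|M_{1,i}|^2 \sum_n \eta_n^2/c_n^2 <\infty$, which is exactly \eqref{N1}. For (K2) and (K3), apply a Kolmogorov-style truncation $A_{n+1,i}=\{|M_{n+1,i}|\le \tau_{n+1}\}$ with $\tau_n$ calibrated to the moment exponent $\alpha$ in (K2) and to the $\log$-integrability \eqref{L2a} in (K3): Borel--Cantelli delivers \eqref{G1}; the symmetry of $M_n$ makes the truncated first moment vanish, giving \eqref{G2} with $V_{n+1,i}=0$; and the standard estimate $E[|M_{n+1,i}|^2 \Ind_{A_{n+1,i}}]\le \tau_{n+1}^{2-\alpha}\, E|M_{n+1,i}|^\alpha$ combined with \eqref{N2} or \eqref{N3} supplies \eqref{G3}. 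For (K4) and (K5), where $M_n$ is only independent or a martingale difference, invoke Theorem~\ref{GSLLN2} instead: the same truncation yields \eqref{G1} and \eqref{G3}; to obtain \eqref{G2a}, use in (K5) that $E[M_{n+1,i}\mid\F_n]=0$, so the truncated conditional mean equals the negative of the conditional tail and is dominated by $a_{n+1}^{1-\alpha}\, E[|M_{n+1,i}|^\alpha\mid\F_n]/c_n$; in (K4) one combines this tail bound with the assumption $\nu_{n+1}/c_n\to 0$.

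The main obstacle will be the joint balancing of the truncation levels $\tau_n$ (or $a_n$) against the three requirements \eqref{G1}, \eqref{G2}/\eqref{G2a} and \eqref{G3}. This is most delicate in case (K3), where the moment assumption is only of $\log$-integrability type and the parameter $\delta$ must be threaded consistently through Borel--Cantelli and the truncated second-moment estimate, and in case (K5), where truncation destroys the martingale-difference property, so the conditional centering error has to be dominated almost surely rather than merely in expectation. Beyond these balancing issues, the proof is a case-by-case adaptation of the argument used for Theorem~\ref{GSLLN-details}, with the only new ingredient being the deterministic rescaling by $1/c_n$.
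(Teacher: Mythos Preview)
Your approach is essentially the same as the paper's: reduce the GSLLN for $\{W_{n+1}\}$ at rate $\{\eta_n\}$ to the corresponding estimates for $\{M_{n+1}\}$ at the ``effective rate'' $\theta_n=\eta_n/c_n$, and then run the five-case analysis of Theorem~\ref{GSLLN-details}. The paper simply does this more economically: it observes that each $(Kj)$ for $(\Mp,\Mpp,\eta,c)$ is literally $(Hj)$ for $(M,\theta)$, cites Theorem~\ref{GSLLN-details} and Remark~\ref{remark} as a black box to obtain the almost-sure convergence of $\sum_n \theta_n M_{n+1}=\sum_n \eta_n W_{n+1}$, and concludes. You instead unroll the proof of Theorem~\ref{GSLLN-details} and re-verify \eqref{G1}--\eqref{G3} directly; that is fine, just longer.

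One genuine caution in your plan concerns $(K4)$ and $(K5)$. You propose to invoke Theorem~\ref{GSLLN2} and establish \eqref{G2a}, i.e.\ $V_{n+1,i}\to 0$ almost surely, by bounding the truncated conditional mean by $a_{n+1}^{1-\alpha}\,E[|M_{n+1,i}|^\alpha\mid\F_n]/c_n$. But the hypotheses \eqref{N4}--\eqref{N5} only control the \emph{unconditional} moments $\nu_{\alpha,n}$; there is no a.s.\ bound on $E[|M_{n+1,i}|^\alpha\mid\F_n]$, so this estimate does not force $V_{n+1,i}\to 0$ pathwise. The paper (in its handling of $(H4)$--$(H5)$) avoids this by verifying \eqref{G2} rather than \eqref{G2a}: with the truncation $A_{n+1,i}=\{\theta_n|M_{n+1,i}|\le 1\}$ one gets $\eta_n\,E|V_{n+1,i}|\le \theta_n^\alpha\,\nu_{\alpha,n+1}$, which is summable by \eqref{N4}/\eqref{N5}. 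Swap your $(K4)$--$(K5)$ step to target \eqref{G2} via Theorem~\ref{GSLLN}, and the rest of your outline goes through.
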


\section{Proofs}\label{sec:Proofs}
  
We begin with
an auxiliary result, which plays an important role in our approach.
The second part of this can be thought of as an extension of Kronecker's
lemma, as stated in \cite[Lemma 3.28]{Breiman92}.
\begin{lemma}\label{deterministic}
Let $\{\tau_n:n\ge 0\}\subseteq (0,\infty)$ be such that 
\be\label{B0} \lim_{n\rightarrow\infty} \tau_n=0,\;\;\; \sum_{n=0}^\infty \tau_n=\infty.\ee
% \sum_n \tau_n=\infty,\;\;\;\sum_n \tau_n^2<\infty.\ee
Let $\{z_n:n\ge 1\} \subseteq \R^d$ and let $\{s_n:n\ge 0\} \subseteq \R^d$ be defined by $s_0=0$ and
\be\label{B1}
s_{n+1}=(1-\tau_n)s_n+\tau_nz_{n+1}.\ee
Then we have
\ben
\item[(i)] If $\lim_n\nmm{z_{n+1}}=0$, then $\lim_n\nmm{s_n}=0$.
\item[(ii)] If $\sum_{k=0}^n\tau_kz_{k+1}$ converges in $\R^d$,
then  also $\lim_n\nmm{s_n}=0$.
\een
\end{lemma}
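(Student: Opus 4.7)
The plan is to prove part (i) by unrolling the recursion into a weighted-average form and then using an $\epsilon$-argument, and then to deduce part (ii) from part (i) by an appropriate substitution that subtracts off the convergent partial sums.

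For part (i), I first use that $\tau_n \to 0$ to pick $N_0$ with $\tau_n \in (0,1)$ for $n \geq N_0$. Iterating the recursion then gives, for any $N \geq N_0$ and $n \geq N$,
\[
s_{n+1} = \prod_{k=N}^n (1-\tau_k) \cdot s_N + \sum_{k=N}^n \tau_k \prod_{j=k+1}^n (1-\tau_j) \, z_{k+1} .
\]
A short induction establishes the identity $\sum_{k=N}^n \tau_k \prod_{j=k+1}^n (1-\tau_j) = 1 - \prod_{k=N}^n (1-\tau_k) \leq 1$, so the weights in the second term form an honest convex combination. Since $\log(1-\tau_k) \leq -\tau_k$ and $\sum_k \tau_k = \infty$, the product $\prod_{k=N}^n (1-\tau_k)$ tends to $0$ as $n \to \infty$. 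Given $\epsilon > 0$, I choose $N \geq N_0$ so that $\nmm{z_{k+1}} \leq \epsilon$ for all $k \geq N$; then the first term vanishes in the limit and the second is bounded by $\epsilon$, so $\limsup_n \nmm{s_n} \leq \epsilon$. As $\epsilon$ is arbitrary, $s_n \to 0$.

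For part (ii), I define the partial sums $T_n = \sum_{k=0}^{n-1} \tau_k z_{k+1}$, which converge to some $T \in \R^d$ by hypothesis. Setting $w_n := s_n - T_n + T$ and using $T_{n+1} - T_n = \tau_n z_{n+1}$, a direct computation yields
\[
w_{n+1} = (1-\tau_n) w_n + \tau_n (T - T_n) .
\]
Since the driving term $T - T_n$ tends to $0$, part (i) applies with input sequence $T - T_n$ in place of $z_{n+1}$, giving $w_n \to 0$; equivalently, $s_n = w_n + T_n - T \to 0$.

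The two identities underpinning part (i) — the explicit unrolling and the telescoping formula for the weights summing to $1 - \prod(1-\tau_k)$ — are routine inductions, so the main conceptual step is locating the correct substitution $w_n = s_n - T_n + T$ in part (ii). This maneuver is the Kronecker-lemma-style device that converts a convergent-series hypothesis into the asymptotic-average hypothesis of part (i), and is the only nontrivial piece of the argument.
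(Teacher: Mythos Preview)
Your proof of part (i) is essentially identical to the paper's: both unroll the recursion from an index past which all $\tau_k<1$, invoke the telescoping identity showing the weights sum to $1-\prod(1-\tau_k)\le 1$, and pass to the limit.

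For part (ii), however, you take a genuinely different route. The paper works directly with the unrolled expression and applies Abel summation: writing the tail sums $q_{m,n}=\sum_{j=m}^n\tau_j z_{j+1}$ and substituting $\tau_k z_{k+1}=q_{k,n}-q_{k+1,n}$, it telescopes the product weights and bounds each surviving $q_{t,n}$ by $\epsilon$ via the Cauchy form of the convergence hypothesis, arriving at $\nmm{s_{n+1}}\le \bigl[\prod_{m\le j\le n}(1-\tau_j)\bigr]\nmm{s_m}+\epsilon$. Your substitution $w_n=s_n-T_n+T$ instead reduces (ii) to (i) in one line, which is cleaner and avoids a second unrolling entirely. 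One minor point worth making explicit in your write-up: part (i) as stated has $s_0=0$, whereas your $w_0=T$; your own proof of (i) never uses the initial value (it restarts the expansion at $N$), so the application is legitimate, but this should be said. The paper's Abel-summation route, though longer here, produces an inequality of exactly the same shape as in (i), and that template is reused verbatim in the proofs of Theorems \ref{Main-SA} and \ref{Main-SGD} when handling the noise term $L_{m,n}$.
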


\begin{proof}
Let us also note that in view of \eqref{B0}, there exists an
$n^*$ such that $\tau_j<1$ for all $j\ge n^*$. 
 We can check using induction on $n$ that for $n^*\le m\le n$ we have
\be  \label{B2}
  \bigl[\prod_{\{m\le j\le n\}}(1-\tau_j)\bigr]+\sum_{k=m}^n\bigl[\prod_{\{k<j\le n\}}(1-\tau_j)\bigr]\tau_k=1,\ee
\be  \label{B2a}
  \bigl[\prod_{\{m\le j\le n\}}(1-\tau_j)\bigr]s_m
+\sum_{k=m}^n\bigl[\prod_{\{k<j\le n\}}(1-\tau_j)\bigr]\tau_kz_{k+1}=  {s_{n+1}}.\ee
Using \eqref{B2} we can see that for $n^*\le m\le n$ we have
\be  \label{B3}
 \sum_{k=m}^n\bigl[\prod_{\{k<j\le n\}}(1-\tau_j)\bigr]\tau_k=1-\bigl[\prod_{\{m\le j\le n\}}(1-\tau_j)\bigr]\le 1.
\ee
 If $\lim_n\nmm{z_{n+1}}=0$ then given $\epsilon>0$, we can choose $m> n^*$ such that 
 $\nmm{z_{n+1}}<\epsilon$ for all $n\ge m$.
Then for $n\ge m$ we have (in view of \eqref{B3})
 \begin{align*} 
 \nmm{s_{n+1}}&\le   \bigl[\prod_{\{m\le j\le n\}}(1-\tau_j)\bigr]\nmm{s_m}+\sum_{k=m}^n\bigl[\prod_{\{k<j\le n\}}(1-\tau_j)\bigr]\tau_k\epsilon\\
 &\le    \bigl[\prod_{\{m\le j\le n\}}(1-\tau_j)\bigr]\nmm{s_m}+\epsilon.
 & \end{align*}
 Taking limit as $n \ap  \infty$, we conclude that $\lim_n\nmm{s_{n+1}}\le \epsilon$.  Since this holds for all $\epsilon>0$, we conclude that
 $\nmm{s_n} \ap  0$ completing proof of $(i)$. 
 
 Coming to the proof of (ii), 
for $0\le m\le n$, let 
 \be\label{J31}
 q_{m,n}= {\sum_{j=m}^n}\tau_jz_{j+1},\ee
 In view of assumption that $ {\sum_{j=1}^n}\tau_jz_{j+1}$ converges as $n \ap \infty$, it follows that $\forall \epsilon>0$, $\exists$ $n_\epsilon$ such that 
\be\label{B11}
\nmm{q_{m,n}}\le\epsilon\;\;\;\forall n\ge m\ge n_\epsilon.% \frac{1}{2}
\ee
Now writing ${c}_{n+1,n}=0$, it follows using \eqref{B2a} that
\bd
\begin{split}
{s_{n+1}}=&  \bigl[\prod_{\{m\le j\le n\}}(1-\tau_j)\bigr]s_m+\sum_{k=m}^n\bigl[\prod_{\{k<j\le n\}}(1-\tau_j)\bigr]\tau_kz_{k+1}.\\
=& \bigl[\prod_{\{m\le j\le n\}}(1-\tau_j)\bigr]s_m+ {\sum_{k=m}^n}[\prod_{\{k<j\le n\}}(1-\tau_j)](q_{k,n}-q_{k+1,n})\\
=& \bigl[\prod_{\{m\le j\le n\}}(1-\tau_j)\bigr]s_m+ [\prod_{\{m<j\le n\}}(1-\tau_j)]q_{m,n}\\
&+ {\sum_{t=m+1}^n}[\prod_{\{t<j\le n\}}(1-\tau_j)-\prod_{\{t-1<j\le n\}}(1-\tau_j)]
q_{t,n} \\
\end{split}
\ed
Thus, in view of \eqref{B11}, we have that, for all $n\ge m\ge n_\epsilon$,
\be
\begin{split}
\nmm{s_{n+1}}\le& \bigl[\prod_{\{m\le j\le n\}}(1-\tau_j)\bigr]\nmm{s_m}+ [\prod_{\{m<j\le n\}}(1-\tau_j)]\nmm{q_{m,n}}\\
&+ {\sum_{t=m+1}^n}[\prod_{\{t<j\le n\}}(1-\tau_j)-\prod_{\{t-1<j\le n\}}(1-\tau_j)]
\nmm{q_{t,n}}\\
 \le &  \bigl[\prod_{\{m\le j\le n\}}(1-\tau_j)\bigr]\nmm{s_m}+ [\prod_{\{m<j\le n\}}(1-\tau_j)]\epsilon\\
&+ {\sum_{t=m+1}^n}[\prod_{\{t<j\le n\}}(1-\tau_j)-\prod_{\{t-1<j\le n\}}(1-\tau_j)]
\epsilon\\
\le&   \bigl[\prod_{\{m\le j\le n\}}(1-\tau_j)\bigr]\nmm{s_m}+ \epsilon.
\end{split}
\ee
As seen in proof of  (i), this yields $\nmm{s_n} \ap  0$. 
\end{proof}

We will need another observation to consider the case of 
i.i.d.\ errors when the mean may not be defined.
\begin{lemma}\label{Log}
$\forall x>0$, $0\le \delta\le 1$ 
\be\label{b49}  
\frac{x}{(\log(1+x))^\delta}\le y\;\;\Longrightarrow \;\;{\log(1+x)}\le 3\,{{\log(1+y)}}
\ee
\end{lemma}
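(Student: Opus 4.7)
The plan is to reduce the claim to two cases based on the size of $\log(1+x)$, and then handle each by elementary inequalities. Set $u = \log(1+x)$, so $x = e^u - 1 > 0$ and the hypothesis reads $y \ge (e^u-1)/u^\delta$. The conclusion to prove is $u \le 3\log(1+y)$, equivalently $(1+y)^3 \ge e^u$.

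First I would dispose of the case $u \le 1$. Here $u^\delta \le 1$ because $\delta \in [0,1]$, so $y \ge x/u^\delta \ge x$. Monotonicity of $\log(1+\cdot)$ immediately gives $\log(1+x) \le \log(1+y) \le 3\log(1+y)$.

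Next, for $u > 1$, since $\delta \le 1$ and $u > 1$ we have $u^\delta \le u$, hence $y \ge x/u = (e^u - 1)/u$ and
\[
1 + y \;\ge\; \frac{u + e^u - 1}{u}.
\]
So it suffices to establish the purely analytic inequality
\[
(u + e^u - 1)^3 \;\ge\; u^3\,e^u \qquad \text{for all } u > 0,
\]
since then $(1+y)^3 \ge e^u = 1+x$ and we are done. Taking cube roots (both sides positive), this is equivalent to $e^u - 1 \ge u(e^{u/3} - 1)$. Using the factorization $e^u - 1 = (e^{u/3}-1)(e^{2u/3} + e^{u/3} + 1)$ and cancelling the positive factor $e^{u/3} - 1$, the inequality becomes
\[
e^{2u/3} + e^{u/3} + 1 \;\ge\; u,
\]
which follows at once from $e^t \ge 1 + t$ applied at $t = 2u/3$ and $t = u/3$ (summing yields $\ge 3 + u$).

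There is no real obstacle in this proof; the only mildly delicate point is the choice of the threshold $u = 1$ between the two cases, which is forced by the fact that $u^\delta \le u$ precisely for $u \ge 1$ (when $\delta \le 1$) and $u^\delta \le 1$ precisely for $u \le 1$. The constant $3$ on the right-hand side is exactly what is needed to make the algebraic inequality $(u+e^u-1)^3 \ge u^3 e^u$ hold uniformly in $u > 0$; a smaller constant would fail for $u$ large.
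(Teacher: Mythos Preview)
Your proof is correct and proceeds differently from the paper's. The paper works directly in $x$: it first handles $\delta=1$ via the bound $\log(1+x)\le\sqrt{2x}$, which yields $y\ge x/\log(1+x)\ge\sqrt{x/2}$, and then verifies the polynomial inequality $(1+\sqrt{x/2})^3\ge 1+x$; the case $\delta<1$ is reduced to $\delta=1$ by the observation $x/(\log(1+x))^\delta\ge x/\log(1+x)$. Your substitution $u=\log(1+x)$ together with the factorization $e^u-1=(e^{u/3}-1)(e^{2u/3}+e^{u/3}+1)$ replaces these steps by a clean exponential identity and the single bound $e^t\ge 1+t$. The case split at $u=1$ is a small price for treating all $\delta\in[0,1]$ uniformly; indeed the paper's reduction of $\delta<1$ to $\delta=1$ tacitly requires $\log(1+x)\ge 1$, so your argument is actually more self-contained on that point. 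One minor quibble with your closing commentary: the claim that a constant smaller than $3$ would fail for large $u$ is not correct (for instance $e^{u/2}+1\ge u$ holds for all $u>0$, so $2$ would also suffice), but this side remark does not affect the validity of the proof itself.
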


\begin{proof}
Let us note that for all $x>0$, $(1+x)\le \exp(\sqrt{2x}\,)$ as can be easily seen. Hence $\log(1+x)\le \sqrt{2x}$ and thus 
$ \frac{x}{\log(1+x)}\ge \frac{x}{\sqrt{2x}}=\frac{\sqrt{x}}{\sqrt{2}}.$ 

If $\frac{x}{{\log(1+x)}}\le y$ then $(1+\frac{\sqrt{x}}{\sqrt{2}})\le (1+y)$. Since $(1+\frac{\sqrt{x}}{\sqrt{2}}\,)^3\ge (1+x)$, we have $(1+x)\le (1+y)^3$ from which the required conclusion follows by taking logrithm. 
This proves the case $\delta=1$. If $\delta <1$, 
\[\frac{x}{(\log(1+x))^\delta}\ge \frac{x}{(\log(1+x))}\]
and hence the result follows from the case $\delta=1$.
\end{proof}

We now come to the proof of Theorem \ref{GSLLN}. 

\begin{proof}
\textbf{Of Theorem \ref{GSLLN}.}
Let us fix  a  $(\F_n)$ adapted sequence of random variables $\{\zeta_n:n\ge 1\}$ that is bounded, say by a constant $C$
and let
\bd
S_{n+1}=(1-t\beta_n )S_n+t\beta_n \zeta_n W_{n+1},\;\;n\ge 0.
\ed
%and let $\widehat{W}_{n+1}=\zeta_nW_{n+1}$
Let us write $W_{n+1,i}=U_{n+1,i}+V_{n+1,i}+R_{n+1,i}$ where
\begin{align*}
U_{n+1,i}&=W_{n+1,i}\Ind_{A_{n+1,i}^c},\\
V_{n+1,i}&= E[W_{n+1,i}\Ind_{A_{n+1,i}}\mid \F_n],\\
R_{n+1,i}&=W_{n+1,i}\Ind_{A_{n+1,i}}-V_{n+1,i}.\end{align*}
Let $H_0=0$, $J_0=0$ and $K_0=0$
for $n\ge 0$ 
\begin{align*}
H_{n+1,i}&=(1-t\beta_n )H_n+t\beta_n \zeta_n U_{n+1,i}\\
J_{n+1,i}&=(1-t\beta_n )J_n+t\beta_n \zeta_n V_{n+1,i}\\
K_{n+1,i}&=(1-t\beta_n )K_n+t\beta_n \zeta_n R_{n+1,i}.\end{align*}
Clearly, $S_{n+1,i}=H_{n+1,i}+J_{n+1,i}+K_{n+1,i}$;
now we will prove that each of $H_{n+1,i}$, $J_{n+1,i}$ and $K_{n+1,i}$
converge to 0 almost surely as $n \ap \infty$, $\forall i$.
That will complete the proof of the theorem. 

In view of condition \eqref{G1} and the Borel-Cantelli lemma we 
can conclude that $\zeta_nU_{n+1,i}$ converges to zero almost surely. 
Hence in view of Lemma \ref{deterministic} it follows that
$H_{n+1,i}$ converge to zero almost surely.
If $V_{n+1,i}$ satisfies \eqref{G2a} then so does  $\zeta_nV_{n+1,i}$, because $\{\zeta_n\}$ is bounded.
As a consequence, $J_{n+1,i}$ converges to 0 in view of (i) in
Lemma \ref{deterministic} while if $V_{n+1,i}$ satisfies \eqref{G2} then
again so does  $\zeta_nV_{n+1,i}$.
As a consequence
 $J_{n+1,i}$ converges to 0 in view of (ii) in Lemma \ref{deterministic}.
 It is clear that
\begin{align*}
 E[|R_{n+1,i}|^2\mid \F_n]&= E[|W_{n+1,i}|^2\Ind_{A_{n+1,i}}\mid  \F_n]-|V_{n+1,i}|^2\\
&\le  E[|W_{n+1,i}|^2\Ind_{A_{n+1,i}}\mid  \F_n]\\
\end{align*}
and since $\{\zeta_n:n\ge 1\}$ is  $(\F_n)$ adapted and is bounded (by $C$), we have 
\be\label{B21}
 E[|\zeta_nR_{n+1,i}|^2]\le C^2  E[|R_{n+1,i}|^2]\le C^2 E[|W_{n+1,i}|^2\Ind_{A_{n+1,i}}].
\ee
Thus $Q_{n+1,i}=\sum_{k=0}^n\beta_k\zeta_nR_{k+1,i}$ is an $L^2$-bounded  martingale in view of assumption \eqref{G3}.
Therefore it follows that
\be\label{B21x}
 \sum_{k=0}^n\beta_k\zeta_nR_{k+1,i} \;\text{ converges almost surely.}
\ee 
 Invoking part (ii)  of Lemma \ref{deterministic} it follows that $K_{n+1,i}$ converges to zero almost surely, completing the proof as remarked earlier.
\end{proof}

We now proceed with the proof of Theorem \ref{Main-SA}.

\begin{proof}
\textbf{Of Theorem \ref{Main-SA}.}
Let $\xi_n=\la_n( X_0, X_1,\ldots, X_n)$ where  $\la_n$ satisfies \eqref{U5} and  let $\phi_n=(1+\max_{0\le k\le n}\nmm{X_k})$. Let 
$b$ be such that  \eqref{U2} holds and let
 $\zeta_n=-b^{-1}\phi_n^{-1}\xi_n$,  $Z_{n+1}=\zeta_nW_{n+1}$,  $\tau_n=b\beta_n$ and $R_n=b^{-1}G(X_n)$.  Let us note that $\zeta_n$ is bounded by $b^{-1}C_1$. Since $\tau_n \phi_nZ_{n+1}=b\beta_n\phi_n\zeta_nW_{n+1}= -\beta_n\xi_nW_{n+1}$
 and as a result we can 
 rewrite \eqref{U6} as
\begin{align*}
X_{n+1}&=X_n-\beta_n \bigl(G(X_n)+\xi_nW_{n+1}\bigr)\\
&=X_n-\tau_n b^{-1}G(X_n)-\beta_n\xi_nW_{n+1}\\
&=X_n-\tau_n R_n+\tau_n\phi_nZ_{n+1}\end{align*}
\vskip -8mm
and hence
\begin{equation}\label{B31}
X_{n+1}-x^*=(1-\tau_n)(X_n-x^*)+\tau_n((X_n-x^*)-R_n)+\tau_n\phi_nZ_{n+1}.
\end{equation}
Note that in view of the assumption \eqref{U2} we have
\begin{equation}\label{B32}
\nmm{((X_k-x^*)-R_k)}\le \rho\nmm{X_k-x^*}.
\end{equation}
Before we proceed, let us note that $\phi_n$ is an increasing process.
Since $\{W_n:n\ge 1\}$ satisfies GSLLN at the rate  $\{\beta_n:n\ge 0\}$, and $\zeta_n$ is bounded, $\tau_n=b\beta_n$ it follows that $S_{n+1}$ converges to zero almost surely where 
 $\{S_n:n\ge 0\}$ is defined by  $S_0=0$ and 
\begin{equation}\label{B28}
S_{n+1}=(1-\tau_n)S_n+\tau_n Z_{n+1},\;\;n\ge 0.\end{equation}
 We are going to show that for every $\omega \in \Omega$ such that $S_{n+1}(\omega)$ defined by \eqref{B28} converges, $X_{n+1}(\omega)-x^*$ defined by  \eqref{B31} converges to zero. We will drop $\omega$ in the notation as usual, and we should keep in mind that the choices of $n,\,m,\,K$ in what follows can depend upon $\omega$. 
 
Let $m^*$ be chosen such that $\tau_n=b\beta_n<1$ for all $n\ge m^*$. Since $\{\beta_n:n\ge 0\}$ satisfies  \eqref{U8}, so does  $\{\tau_n:n\ge 0\}$. 
We can check, using recursion and \eqref{B31}
that for $ 0\le m\le n$,  we have
 \begin{equation}\label{R8}
 \begin{split}
X_{n+1}-x^*=& \bigl[\prod_{j=m}^{n}(1-\tau_j)\bigr](X_m-x^*)\\&\;\;\;\;+ \sum_{k=m}^{n} \bigl[\prod_{_{\{j:k<j\le n\}}}(1-\tau_j)\bigr]\tau_k((X_k-x^*)-R_k)\\
&\;\;\;\;+ \sum_{k=m}^{n} \bigl[\prod_{_{\{j:k<j\le n\}}}(1-\tau_j)\bigr]\phi_k\tau_k Z_{k+1}. 
\end{split}\end{equation}
Here, product over an empty set is taken as 1.
 Let us introduce notation for $0\le m\le n$,
 \begin{align*}
  U_{m,n}&= \bigl[\prod_{j=m}^{n}(1-\tau_j)\bigr](X_m-x^*)\\
  V_{m,n}&= \sum_{k=m}^{n} \bigl[\prod_{_{\{j:k<j\le n\}}}(1-\tau_j)\bigr]\tau_k((X_k-x^*)-R_k)\\
 L_{m,n}&= {\sum_{k=m}^n}\bigl[\prod_{\{k<j\le n\}}(1-\tau_j)\bigr]\phi_k\tau_k Z_{k+1}\\
  D_{m,n}&= {\sum_{k=m}^n}\bigl[\prod_{\{k<j\le n\}}(1-\tau_j)\bigr]\tau_k Z_{k+1}
\end{align*}
Let us note (by recursion on $n$, as in \eqref{B2a}) that
 \begin{equation}\label{B41}
 S_{n+1}=  \bigl[\prod_{\{m\le j\le n\}}(1-\tau_j)\bigr]S_m
+D_{m,n}.\ee
In view of \eqref{R8}, we have for any $0\le m\le n$
\[X_{n+1}-x^*= U_{m,n}+ V_{m,n}+ L_{m,n}\]
and hence (noting that $m^*\le m\le n$ implies $(1-\tau_j)>0$ for all $j$, $m\le j\le n$)
 \begin{equation}\label{R19}
\nmm{X_{n+1}-x^*}\le \nmm{ U_{m,n}}+\nmm{ V_{m,n}}+\nmm{ L_{m,n}},\;\;\;\forall m^*\le m\le n.
\end{equation}
Let $\gamma_n=\max\{\nmm{X_k-x^*},\;\;0\le k\le n\}$ and $\gamma^*=\sup_n \gamma_n$. Then $\nmm{X_k}\le (\gamma_n+\nmm{x^*})$ for $0\le k\le n$, and recalling that $\phi_n=(1+\max_{0\le k\le n}\nmm{ X_k})$, it follows that
\begin{equation}\label{R25}
\phi_n\le (1+\gamma_n+\nmm{x^*}),\;\;\;\;\forall n.\end{equation}
We will first prove that for all
$\epsilon>0$, $\exists$ $n_\epsilon $ such that 
\begin{equation}\label{R24}
\nmm{ L_{m,n}}\le(1+\gamma_n+\nmm{x^*})\epsilon\;\;\;\;\;\forall n\ge m\ge n_\epsilon.
\end{equation}
Recall, we are working with $\omega$ such that $S_{n+1}=S_{n+1}(\omega)$ converges to zero and hence
given $\epsilon>0$,   we can choose $n_\epsilon>m^*$ such that 
\[
\nmm{S_{n+1}}\le  \frac{1}{2}\epsilon\;\;\;\forall n\ge n_\epsilon.
\]
Now in view of  \eqref{B41}, it follows that 
\begin{equation}\label{J11}
\nmm{D_{m,n+1}}\le  \epsilon\;\;\;\forall n\ge m\ge n_\epsilon.
\end{equation} 
Let us note that
\be\label{R11}
\begin{split}
 L_{m,n}=& {\sum_{k=m}^n}\phi_k[\prod_{\{k<j\le n\}}(1-\tau_j)]\tau_k Z_{k+1}\\
=&\phi_m {\sum_{k=m}^n}[\prod_{\{k<j\le n\}}(1-\tau_j)]\tau_k Z_{k+1}\\
&\;\;\;\;\;\;\;\;\;\;\;\;\;\;\;\;+ {\sum_{k=m+1}^n}(\phi_k-\phi_m)[\prod_{\{k<j\le n\}}(1-\tau_j)]\tau_k Z_{k+1}\\
=&\phi_mD_{m,n}+ {\sum_{k=m+1}^n}(\phi_k-\phi_m)[\prod_{\{k<j\le n\}}(1-\tau_j)]\tau_k Z_{k+1}\\
\end{split}\ee
Proceeding likewise, we will get
\[ L_{m,n}=\phi_mD_{m,n}+(\phi_{m+1}-\phi_m)D_{m+1,n}+\ldots +(\phi_{n}-\phi_{n-1})D_{n,n}\]
Since $\phi_k$ is increasing, in view of \eqref{J11} it follows that $\forall n\ge m\ge n_\epsilon$, 
\[\begin{split} \nmm{ L_{m,n}}&\le \phi_m\nmm{D_{m,n}}+(\phi_{m+1}-\phi_m)\nmm{D_{m+1,n}}+\ldots +(\phi_{n}-\phi_{n-1})\nmm{D_{n,n}}\\
&\le \phi_m  \epsilon+(\phi_{m+1}-\phi_m)  \epsilon+\ldots +(\phi_{n}-\phi_{n-1})  \epsilon\\
&=\phi_{n}  \epsilon\\
&\le(1+\gamma_n+\nmm{x^*})\epsilon\end{split}
\]
where we have used \eqref{R25} at the last step. 
This proves \eqref{R24}. Next we will prove that
\begin{equation}\label{b21}
\gamma^*=\sup\{\nmm{X_n-x^*}:n\ge 0\}<\infty.
\end{equation}
 First, using $\rho<1$, get $\epsilon>0$   such that  $\epsilon<\frac{1}{3}$ and
\begin{equation}\label{R14}
\rho(1+3\epsilon)\le 1.
\end{equation}
For this $\epsilon$, choose $n_\epsilon$ such that \eqref{R24} holds.  Now we will prove, by induction that $\forall k\ge n_\epsilon$
\begin{equation}\label{R13}
\gamma_k\le (1+3\epsilon)(1+\gamma_{n_\epsilon}+\nmm{x^*}).
\end{equation}
Clearly \eqref{R13} is true for $k=n_\epsilon$.   
Suppose that \eqref{R13} holds for all $k$ such that  $n_\epsilon\le k\le n^*$.
We will prove that \eqref{R13} also holds for $k=n^*+1$, completing the proof that \eqref{R13} is true for all $k\ge n_\epsilon$.  To prove the induction step, suffices to prove  that
\[\nmm{X_{n^*+1}-x^*}\le (1+3\epsilon)(1+\gamma_{n_\epsilon}+\nmm{x^*}).\]
For $n_\epsilon\le k\le n^*$, \eqref{B32}, \eqref{R14}, \eqref{R13} imply that
\begin{equation}\label{b25a} \begin{split}
\nmm{((X_k-x^*)-R_k)}&\le \rho\nmm{X_k-x^*}\\
&\le \rho\gamma_k\\
&\le \rho (1+3\epsilon)(1+\gamma_{n_\epsilon}+\nmm{x^*})\\
&\le (1+\gamma_{n_\epsilon}+\nmm{x^*}).
\end{split}
\end{equation}
As a consequence 
\begin{equation}\label{R15}\begin{split}
\nmm{ V_{n_\epsilon,n^*}}&\le  \sum_{k=n_\epsilon}^{n^*} \bigl[\prod_{_{\{j:k<j\le n\}}}(1-\tau_j)\bigr]\tau_k\nmm{(  (X_k-x^*)-R_k)}\\
&\le (1+\gamma_{n_\epsilon}+\nmm{x^*}) \sum_{k=n_\epsilon}^{n^*} \bigl[\prod_{_{\{j:k<j\le n^*\}}}(1-\tau_j)\bigr]\tau_k.
\end{split}\end{equation}
It is easy to see that
\begin{equation}\label{b25b}\begin{split}
 \nmm{ U_{n_\epsilon,n^*}}&=  \bigl[\prod_{j=n_\epsilon}^{n^*}(1-\tau_j)\bigr]\nmm{X_{n_\epsilon}-x^*}\\
   &\le \gamma_{n_\epsilon} \bigl[\prod_{j=n_\epsilon}^{n^*}(1-\tau_j)\bigr].\end{split}
 \end{equation}
For all $m\le n$ one can verify (by induction on $n$) that
\begin{equation}\label{R17}
 \bigl[\prod_{k=m}^{n}(1-\tau_k)\bigr]+ \sum_{k=m}^{n} \bigl[\prod_{_{\{j:k<j\le n\}}}(1-\tau_j)\bigr]\tau_k=1,    
\end{equation}
and then putting together \eqref{R15}, \eqref{b25b}, \eqref{R17} we conclude that
\begin{equation}\label{R20}
\nmm{ U_{n_\epsilon,n^*}}+\nmm{ V_{n_\epsilon,n^*}}\le (1+\gamma_{n_\epsilon}+\nmm{x^*}).\end{equation}
Using \eqref{R24}, along with \eqref{R13} for $k=n^*$ we deduce
  \begin{equation}\label{R21}
\begin{split}
\nmm{ L_{n_\epsilon,n^*}}
&\le \epsilon(1+\gamma_{n^*}+\nmm{x^*})\\
&\le \epsilon(1+\nmm{x^*})+ \epsilon\gamma_{n^*}\\
&\le \epsilon(1+\nmm{x^*})+ \epsilon (1+3\epsilon)(1+\gamma_{n_\epsilon}+\nmm{x^*})\\
&\le (2\epsilon+3\epsilon^2)(1+\nmm{x^*})+\epsilon (1+3\epsilon)\gamma_{n_\epsilon}\\
%&\le \epsilon(1+2\epsilon)(1+\gamma_{n_\epsilon}+\nmm{x^*})\\
&\le 3\epsilon(1+\gamma_{n_\epsilon}+\nmm{x^*}). 
\end{split}
\end{equation}
Here, since $\epsilon<\frac{1}{3}$, we have used $(2\epsilon+3\epsilon^2)\le 3\epsilon$ and also $(1+3\epsilon)\le 2$.
We thus conclude, using \eqref{R19}, \eqref{R20} along with \eqref{R21}, that
\begin{equation}\label{b31}\begin{split}
\nmm{X_{n^*+1}-x^*}&\le \nmm{ U_{n_\epsilon,n^*}}+\nmm{ V_{n_\epsilon,n^*}}+\nmm{ L_{n_\epsilon,n^*}}\\
&\le (1+3\epsilon)(1+\gamma_{n_\epsilon}+\nmm{x^*}).
\end{split}\end{equation}
This completes the induction proof and thus \eqref{R13} holds for all $k\ge n_{\epsilon}$  proving \eqref{b21}.

It also follows that $\sup\{\nmm{X_k}: k\ge 0\}<\infty $ and hence that 
\begin{equation}\label{b51}
\sup\{\phi_{n}:n\ge 0\}<\infty.
\end{equation}
Using \eqref{R24} and \eqref{b51}, it follows that for all $\delta>0$, we can get $m_\delta$ such that
 \begin{equation}\label{b34}  
  \nmm{ L_{m,n}}<\delta \;\;\;\forall m,n \text{ such that }m_\delta\le m\le n
  \end{equation}
and thus we have for $m\ge m_\delta$
\begin{equation}\label{b35}
\limsup_{n \ap \infty}\nmm{ L_{m,n}}\le\delta.
\end{equation}
Note that in view of the assumption that $\{\beta_n:n\ge 0\}$ satisfies \eqref{U8} and $\tau_n=b\beta_n$, we have $\sum_n\tau_n=\infty$ and hence
\begin{equation}\label{b32}
    \lim_{n \ap \infty}\bigl[\prod_{j=m}^{n}(1-\tau_j)\bigr]=0, \;\;\;\forall m<\infty.
  \end{equation} 
For $m\ge 1$, let $\theta_m=\sup\{\nmm{X_k-x^*}: k\ge m\}$. Then clearly, $\theta_m$ is decreasing and $\theta_m\le \gamma^*<\infty$ for all $m$. Let $\theta^*=\lim_{m \ap \infty}\theta_m$. Clearly $\theta^*\le \gamma^*$. Remains to show that $\theta^*=0$.
Since
\[ \nmm{ U_{m,n}}\le \    =\nmm{X_m-x^*} \bigl[\prod_{j=m}^{n}(1-\tau_j)\bigr]\]
and thus in view of \eqref{b32} we have
\begin{equation}\label{b33}
\lim_{n \ap \infty}\nmm{ U_{m,n}}=0 \;\;\;\forall m<\infty.
  \end{equation} 
Note that for any $m\le n$ we have
\begin{equation}\label{b37}\begin{split}
\nmm{ V_{m,n}}&\le  \sum_{k=m}^{n} \bigl[\prod_{_{\{j:k<j\le n\}}}(1-\tau_j)\bigr]\tau_k\nmm{(  (X_k-x^*)-R_k)}\\
&\le  \sum_{k=m}^{n} \bigl[\prod_{_{\{j:k<j\le n\}}}(1-\tau_j)\bigr]\tau_k\rho\nmm{(X_k-x^*)}\\
&\le\rho \sum_{k=m}^{n} \bigl[\prod_{_{\{j:k<j\le n\}}}(1-\tau_j)\bigr]\tau_k\theta_k \\
&\le \rho\theta_m \\
\end{split}\end{equation}   
Hence for all $m$
\begin{equation}\label{R27}
\limsup_{n \ap \infty}\nmm{ V_{m,n}}\le 
\rho\theta_m.\end{equation}
Combining \eqref{R19} along with \eqref{b35}, \eqref{b33} and  \eqref{R27}, we get for any $m\ge m_\delta$ 
\[
\limsup_{n \ap \infty}\nmm{X_{n+1}-x^*}\le \rho\theta_m +\delta.\]
Now taking limit as $m \ap \infty$ on the RHS above and then limit as $\delta\downarrow 0$, we get
\[\limsup_{n \ap \infty}\nmm{X_{n+1}-x^*}\le \rho\theta^*.\]
From the definition of $\theta^*$, it follows that  $\limsup_{n \ap \infty}\nmm{X_{n+1}-x^*}=\theta^*$ and hence we have shown
\begin{equation}\label{b43}  
\theta^*\le \rho\theta^*.
\end{equation} 
Since $\rho<1$ and $0\le\theta^*<\infty$, \eqref{b43} implies $\theta^*=0$. This completes proof of Theorem   \ref{Main-SA}.
\end{proof}

Next we present a proof of Theorem \ref{Main-SGD}.

\begin{proof}
\textbf{Of Theorem \ref{Main-SGD}.}
Several steps in the proof are the same as in the proof of
Theorem \ref{Main-SA}. 
Recall that $\xi_n=\la_n( Y_0, Y_1,\ldots, Y_n)$ where  $\la_n$ satisfies \eqref{V2} and  let $\phi_n=(1+\max_{0\le k\le n}\nmi{ Y_k})$.  Let 
$b$ be such that  \eqref{V1} holds  and let
 $\zeta_n=-b^{-1}\phi_n^{-1}\xi_n$, $T_{n,i}=  \frac{1}{2bc_n}\bigl(F(Y_n+e_ic_n)-F(Y_n-e_ic_n))$, $Z_{n+1,i}=\psi_{n,i}\zeta_nW_{n+1,i}$,  $\tau_n=b\eta_n$. 
Then $\tau_n \phi_nZ_{n+1,i}=b\eta_n\psi_{n,i}\phi_n\zeta_nW_{n+1,i}= -\eta_n\psi_{n,i}\xi_nM_{n+1,i}$. Note that $\{\zeta_n:n\ge 0\}$ is bounded. 
For later use, let us note that in view of assumption  \eqref{V1} on $F$, it follows that
\be\label{f2}  
|(Y_{n,i}-x^*_i)-T_{n,i}|\le \rho |Y_{n,i}-x^*_i|+ ac_n.
\ee

Recalling that  $W_{n+1}=\frac{1}{2c_n}(\Mp_{n+1}-\Mpp_{n+1})$, $n\ge 0$, let us rewrite \eqref{V4} as follows:
\begin{align*}
Y_{n+1}-x^*=&Y_n-x^*- 
\sum_i\psi_{n,i}\eta_n\Bigl(\frac{\bigl(F(Y_n+e_ic_n)+\xi_n
\Mp_{n+1,i}\bigr)-\bigl(F(Y_n-e_ic_n)+\xi_n\Mpp_{n+1,i}\bigr)}{2c_n}\Bigr)e_i\\
=&Y_n-x^*- \sum_i\psi_{n,i}\tau_n\bigl(\frac{F(Y_n+e_ic_n)-F(Y_n-e_ic_n)}{2bc_n}\bigr)e_i- \sum_i\psi_{n,i}\eta_n\bigl(\frac{\xi_n(
\Mp_{n+1,i}-\Mpp_{n+1,i})}{2c_n}\bigr)e_i\\
=& \sum_i(1-\psi_{n,i}\tau_n)(Y_{n,i}-x^*_i)e_i\\
&+ \sum_i\psi_{n,i}\tau_n\bigl(Y_{n,i}-x^*_i- \frac{1}{2bc_n}\bigl(F(Y_n+e_ic_n)-F(Y_n-e_ic_n)\bigr)\bigr)e_i-\sum_i\psi_{n,i}\eta_n \xi_nW_{n+1}e_i\\
=& \sum_i\bigl((1-\psi_{n,i}\tau_n)(Y_{n,i}-x^*_i)+\psi_{n,i}\tau_n((Y_{n,i}-x^*_i)-T_{n,i})+\tau_n\phi_nZ_{n+1,i}\bigr)e_i\end{align*}
and hence
\begin{equation}\label{C1}
Y_{n+1,i}-x^*_i=(1-\psi_{n,i}\tau_n)((Y_{n,i}-x^*_i)+\psi_{n,i}\tau_n(Y_{n,i}-x^*_i)-T_{n,i})+\tau_n\phi_nZ_{n+1,i}.
\end{equation}
We we will now prove that $\nmi{Y_{n+1} - x^*} \ap 0$ almost surely.
% for each $i$, $|Y_{n+1,i}-x^*_i|$ converges to zero. 

Before we proceed, let us note that $\phi_n$ is an increasing process and also that when $\psi_{n,i}=0$, $Z_{n+1,i}=0$ and $Y_{n+1,i}-x^*_i=Y_{n,i}-x^*_i$. 
Since $\{W_n:n\ge 1\}$ satisfies GSLLN at the rate  $\{\eta_n:n\ge 0\}$, and $\zeta_n$ is bounded, $\tau_n=b\eta_n$ it follows that $S_{n+1,i}$ converges to zero almost surely for each $i$, $1\le i\le d$ where 
 $\{S_{n,i}:n\ge 0\}$ is defined by  $S_{0,i}=0$ and 
\begin{equation}\label{C2}
S_{n+1,i}=(1-\tau_n)S_{n,i}+\tau_n Z_{n+1,i},\;\;\;n\ge 0.\end{equation}
Recall that $\{\eta_n, c_n,\psi_{n,i}: n\ge 0, 1\le i\le d\}$ satisfy \eqref{V3}, \eqref{V3x}. Thus $\sum_n\psi_{n,i}\tau_n=\infty$ almost surely. 
 We are going to show that for every $\omega \in \Omega$ such that $S_{n+1,i}(\omega)$ defined by \eqref{C2} converges and $\sum_n\psi_{n,i}(\omega)\tau_n=\infty$ for each $i$;
$Y_{n+1,i}(\omega)-x^*_i$ defined by  \eqref{C1} converges  to zero, as in proof of Theorem \ref{Main-SA}. 
We will drop $\omega$ in the notation as usual, and we should keep in mind that choices of $n,\,m,\,K$ in what follows can depend upon $\omega$.

Let $m^*$ be chosen such that $\tau_n=b\beta_n<1$ for all $n\ge m^*$. Since $\{\beta_n:n\ge 0\}$ satisfies  \eqref{U8}, so does  $\{\tau_n:n\ge 0\}$. 
We can check, using recursion and \eqref{C1}
that for $ 0\le m\le n$,  we have
 \begin{equation}\label{C8}
 \begin{split}
Y_{n+1,i}-x^*_i=& \bigl[\prod_{j=m}^{n}(1-\psi_{j,i}\tau_j)(Y_{m,i}-x^*_i)\\&\;\;\;\;+ \sum_{k=m}^{n} \bigl[\prod_{_{\{j:k<j\le n\}}}(1-\psi_{j,i}\tau_j)\bigr]\psi_{k,i}\tau_kT_{k,i}\\
&\;\;\;\;+ \sum_{k=m}^{n} \bigl[\prod_{_{\{j:k<j\le n\}}}(1-\psi_{j,i}\tau_j)\bigr]\psi_{n,i}\phi_k\tau_k Z_{k+1}. 
\end{split}\end{equation}
Here, product over an empty set is taken as 1.
 Let us introduce notation for $0\le m\le n$,
 \begin{align}
  U_{m,n,i}&= \bigl[\prod_{j=m}^{n}(1-\psi_{j,i}\tau_j)(Y_{m,i}-x^*_i)\\
  V_{m,n,i}&= \sum_{k=m}^{n} \bigl[\prod_{_{\{j:k<j\le n\}}}(1-\psi_{j,i}\tau_j)\bigr]\psi_{k,i}\tau_k(Y_{m,i}-x^*_i-T_{k,i})\\
 L_{m,n,i}&= {\sum_{k=m}^n}\bigl[\prod_{\{k<j\le n\}}(1-\psi_{j,i}\tau_j)\bigr]\psi_{k,i}\phi_k\tau_k Z_{k+1,i}\\
  D_{m,n,i}&= {\sum_{k=m}^n}\bigl[\prod_{\{k<j\le n\}}(1-\psi_{j,i}\tau_j)\bigr]\psi_{k,i}\tau_k Z_{k+1,i}
\end{align}
Let us note (by recursion on $n$, as in \eqref{B2a}) that
 \begin{equation}\label{C9}
 S_{n+1,i}=  \bigl[\prod_{\{m\le j\le n\}}(1-\psi_{j,i}\tau_j)\bigr]S_{m,i}
+D_{m,n,i}.\ee
In view of \eqref{C8}, we have for any $0\le m\le n$
\[Y_{n+1,i}-x^*_i= U_{m,n,i}+ V_{m,n,i}+ L_{m,n,i}\]
and hence (noting that $m^*\le m\le n$ implies $(1-\psi_{j,i}\tau_j)>0$ for all $j$, $m\le j\le n$)
 \begin{equation*}
|Y_{n+1,i}-x^*_i|\le | U_{m,n,i}|+| V_{m,n,i}|+|L_{m,n,i}|,\;\;\;\forall m^*\le m\le n.
\end{equation*}
\begin{equation}\label{R43}
\nmi{Y_{n+1}-x^*}\le \nmi{U_{m,n}}+\nmi{V_{m,n}}+\nmi{L_{m,n}},\;\;\;\forall m^*\le m\le n.
\end{equation}
Let $\gamma_n=\max\{\nmi{Y_k-x^*}:\;0\le k\le n\}$ and $\gamma^*=\sup_n \gamma_n$. Note that for all $i$ and $k\le n$,
$|Y_{k,i}-x^*_i|\le\gamma_n$. By definition, we have $\nmi{Y_n}\le (\gamma_n+\nmi{x^*})$ and recalling that $\phi_n=(1+\max_{0\le k\le n}\nmi{Y_k})$,  it follows that
\begin{equation}\label{R45}
\phi_n\le (1+\gamma_n+\nmi{x^*})\;\;\;\;\forall n.\end{equation}
We will first prove that for all
$\epsilon>0$, $\exists$ $n_\epsilon $ such that 
\begin{equation}\label{R46}
| L_{m,n,i}|\le(1+\gamma_n+\nmi{x^*})\epsilon\;\;\;\;\;\forall n\ge m\ge n_\epsilon,\;1\le i\le d.
\end{equation}
Recall, we are working with $\omega$ such that $S_{n+1}=S_{n+1}(\omega)$ converges to zero and hence
given $\epsilon>0$,   we can choose $n_\epsilon>m^*$ such that 
\[
\nmi{S_{n+1}}\le  \frac{1}{2}\epsilon\;\;\;\forall n\ge n_\epsilon
\]
and thus $|S_{n+1,i}|\le  \frac{1}{2}\epsilon\;\;\;\forall n\ge n_\epsilon$.
In view of  \eqref{C9}, it follows that 
\begin{equation}\label{R41}
|D_{m,n+1,i}|\le  \epsilon\;\;\;\forall n\ge m\ge n_\epsilon.
\end{equation} 
As seen in proof of Theorem \ref{Main-SA}, it follows that 
\be\label{R11x}
\begin{split}
 L_{m,n,i}=& {\sum_{k=m}^n}\phi_k[\prod_{\{k<j\le n\}}(1-\psi_{j,i}\tau_j)]\psi_{k,i}\tau_k Z_{k+1,i}\\
=&\phi_m {\sum_{k=m}^n}[\prod_{\{k<j\le n\}}(1-\psi_{j,i}\tau_j)]\psi_{k,i}\tau_k Z_{k+1,i}\\
&\;\;\;\;\;\;\;\;\;\;\;\;\;\;\;\;+ {\sum_{k=m+1}^n}(\phi_k-\phi_m)[\prod_{\{k<j\le n\}}(1-\psi_{j,i}\tau_j)]\psi_{k,i}\tau_k Z_{k+1,i}\\
=&\phi_mD_{m,n}+ {\sum_{k=m+1}^n}(\phi_k-\phi_m)[\prod_{\{k<j\le n\}}(1-\psi_{j,i}\tau_j)]\psi_{k,i}\tau_k Z_{k+1,i}\\
\end{split}\ee
Proceeding likewise, we will get
\[ L_{m,n,i}=\phi_mD_{m,n}+(\phi_{m+1}-\phi_m)D_{m+1,n,i}+\ldots +(\phi_{n}-\phi_{n-1})D_{n,n,i}\]
Since $\phi_k$ is increasing, in view of \eqref{R41} it follows that $\forall n\ge m\ge n_\epsilon$, 
\[\begin{split} |L_{m,n,i}|&\le \phi_m|D_{m,n,i}|+(\phi_{m+1}-\phi_m)|D_{m+1,n,i}|+\ldots +(\phi_{n}-\phi_{n-1})|D_{n,n,i}|\\
&\le \phi_m  \epsilon+(\phi_{m+1}-\phi_m)  \epsilon+\ldots +(\phi_{n}-\phi_{n-1})  \epsilon\\
&=\phi_{n}  \epsilon\\
&\le(1+\gamma_n+\nmi{x^*})\epsilon\end{split}
\]
where we have used \eqref{R45} at the last step. 
This proves \eqref{R46}. Next we will prove that
\begin{equation}\label{R51}
\gamma^*=\sup\{\nmi{Y_n-x^*}:n\ge 0\}<\infty.
\end{equation}
 First, using $\rho<1$, get $\epsilon>0$   such that  $\epsilon<\frac{1}{3}$
\begin{equation}\label{R53}
\rho(1+3\epsilon)+\epsilon\le 1.
\end{equation}
For this $\epsilon$, choose $n_\epsilon$ such that \eqref{R46} holds and also
\begin{equation}\label{R47}
ac_n\le \epsilon \;\;\forall n\ge n_\epsilon,
\end{equation}
with $a$ such that \eqref{V1} holds.
This can be done since $c_n$ converges to zero in view of \eqref{V3}.
% assumption
% which can be done since $\{c_n:n\ge 1\}$ satisfies \eqref{V3}.

Now we will prove, by induction on $k$ that $\forall k\ge n_\epsilon$
\begin{equation}\label{R54}
\gamma_k\le (1+3\epsilon)(1+\gamma_{n_\epsilon}+\nmi{x^*}).
\end{equation}
Clearly \eqref{R54} is true for $k=n_\epsilon$.   
Suppose that \eqref{R54} holds for all $k$ such that  $n_\epsilon\le k\le n^*$.
We will prove that \eqref{R54} also holds for $k=n^*+1$, completing the proof that \eqref{R54} is true for all $k\ge n_\epsilon$.  To prove the induction step, suffices to prove  that for all $1\le i\le d$
\begin{equation}\label{R58}
|Y_{n^*+1,i}-x^*_i|\le (1+3\epsilon)(1+\gamma_{n_\epsilon}+\nmi{x^*_i}).\end{equation}
This follows trivially if $\psi_{n^*,i}=0$. When  $\psi_{n^*,i}=1$,
for $n_\epsilon\le k\le n^*$, as noted in \eqref{f2}
\begin{equation}\label{R59} \begin{split}
|Y_{k,i}-x^*_i-T_{k,i}|&\le \rho\nmi{Y_{k}-x^*}+ac_n\\
&\le \rho\gamma_k+ac_n\\
&\le \rho (1+3\epsilon)(1+\gamma_{n_\epsilon}+\nmi{x^*})+\epsilon \\
&\le (1+\gamma_{n_\epsilon}+\nmi{x^*})
\end{split}
\end{equation}
where we have used \eqref{R51} and \eqref{R47}. As a consequence
\begin{equation}\label{R61}\begin{split}
|V_{n_\epsilon,n^*,i}|&\le \textstyle\sum_{k=n_\epsilon}^{n^*}\textstyle\bigl[\prod_{_{\{j:k<j\le n\}}}(1-\psi_{j,i}\tau_j)\bigr]\psi_{k,i}\tau_k|Y_{k,i}-x^*_i-T_{k,i}|\\
&\le (1+\gamma_{n_\epsilon}+\nmi{x^*})\textstyle\sum_{k=n_\epsilon}^{n^*}\textstyle\bigl[\prod_{_{\{j:k<j\le n^*\}}}(1-\psi_{j,i}\tau_j)\bigr]\psi_{k,i}\tau_k.
\end{split}\end{equation}
It is easy to see that
\begin{equation}\label{R62}\begin{split}
|U_{n_\epsilon,n^*,i}|&=  \bigl[\prod_{j=n_\epsilon}^{n^*}(1-\psi_{j,i}\tau_j)\bigr]|Y_{n_\epsilon,i}-x^*_i|\\
   &\le \gamma_{n_\epsilon} \bigl[\prod_{j=n_\epsilon}^{n^*}(1-\psi_{j,i}\tau_j)\bigr].\end{split}
 \end{equation}
For all $m\le n$ and $1\le i\le d$ one can verify (by induction on $n$) that
\begin{equation}\label{R64}
 \Bigl(\bigl[\prod_{k=m}^{n}(1-\psi_{k,i}\tau_k)\bigr]+ \sum_{k=m}^{n} \bigl[\prod_{_{\{j:k<j\le n\}}}(1-\psi_{j,i}\tau_j)\bigr]\psi_{k,i}\tau_k\Bigr)=1,    
\end{equation}
and then putting together \eqref{R61}, \eqref{R62}, \eqref{R64} we conclude that
\begin{equation}\label{R67}
|U_{n_\epsilon,n^*,i}|+|V_{n_\epsilon,n^*,i}|\le (1+\gamma_{n_\epsilon}+\nmi{x^*}).\end{equation}
Using \eqref{R46}, along with \eqref{R54} for $k=n^*$ we deduce
  \begin{equation}\label{R68}
\begin{split}
|L_{n_\epsilon,n^*,i}|
&\le \epsilon(1+\gamma_{n^*}+\nmi{x^*})\\
&\le \epsilon(1+\nmi{x^*})+ \epsilon\gamma_{n^*}\\
&\le \epsilon(1+\nmi{x^*})+ \epsilon (1+3\epsilon)(1+\gamma_{n_\epsilon}+\nmi{x^*})\\
&\le (2\epsilon+3\epsilon^2)(1+\nmi{x^*})+\epsilon (1+3\epsilon)\gamma_{n_\epsilon}\\
&\le 3\epsilon(1+\gamma_{n_\epsilon}+\nmi{x^*}). 
\end{split}
\end{equation}
Here, since $\epsilon<\frac{1}{3}$, we have used $(2\epsilon+3\epsilon^2)\le 3\epsilon$ and also $(1+3\epsilon)\le 2$.
We thus conclude, using \eqref{R43}, \eqref{R67} along with \eqref{R68}, that
\begin{equation}\label{B71}\begin{split}
|Y_{n^*+1,i}-x^*_i|&\le | U_{n_\epsilon,n^*}|+| V_{n_\epsilon,n^*}|+|L_{n_\epsilon,n^*}|\\
&\le (1+3\epsilon)(1+\gamma_{n_\epsilon}+\nmi{x^*}).
\end{split}\end{equation}
Thus 
\[\nmi{Y_{n^*+1}-x^*}\le (1+3\epsilon)(1+\gamma_{n_\epsilon}+\nmi{x^*}).\]
This completes the induction proof and thus \eqref{R54} holds for all $k\ge n_{\epsilon}$  proving \eqref{R51}.

It also follows that $\sup\{\nmi{Y_k}: k\ge 0\}<\infty $ and hence that 
\begin{equation}\label{B72}
\sup\{\phi_{n}:n\ge 0\}<\infty.
\end{equation}
Using \eqref{R46} and \eqref{B72}, it follows that for all $\delta>0$, we can get $m_\delta$ such that
 \begin{equation}\label{B74}  
| L_{m,n,i}|<\delta \;\;\;\forall m,n \text{ such that }m_\delta\le m\le n, \;\forall 1\le i\le d
  \end{equation}
and thus we have for $m\ge m_\delta$, $1\le i\le d$
\[\limsup_{n \ap \infty}| L_{m,n,i}|\le\delta \;\;\;\;\;\forall i\]
and hence for all $m\ge m_\delta$
\begin{equation}\label{B75}
\limsup_{n \ap \infty}\nmi{L_{m,n,}}\le\delta.
\end{equation}
Since  $\sum_n\psi_{n,i}\tau_n=\infty$ we have
\begin{equation}\label{B77}
    \lim_{n \ap \infty}\bigl[\prod_{j=m}^{n}(1-\psi_{j,i}\tau_j)\bigr]=0, \;\;\;\forall m<\infty.
  \end{equation} 
For $m\ge 1$, let $\theta_m=\sup\{\nmi{Y_k-x^*}: k\ge m\}$. Then clearly, $\theta_m$ is decreasing and $\theta_m\le \gamma^*<\infty$ for all $m$. Let $\theta^*=\lim_{m \ap \infty}\theta_m$. Clearly $\theta^*\le \gamma^*$. Remains to show that $\theta^*=0$.
Since
\[ | U_{m,n,i}|\le |Y_{m,i}-x^*_i| \bigl[\prod_{j=m}^{n}(1-\psi_{j,i}\tau_j)\bigr]\]
and thus in view of \eqref{B77} we have
\begin{equation}\label{B78}
\lim_{n \ap \infty}\nmi{ U_{m,n}}=0 \;\;\;\;\;\forall m<\infty.
  \end{equation} 
Note that for any $m\le n$ we have
\begin{equation}\label{B79}\begin{split}
| V_{m,n,i}|&\le  \sum_{k=m}^{n} \bigl[\prod_{_{\{j:k<j\le n\}}}(1-\psi_{j,i}\tau_j)\bigr]\psi_{k,i}\tau_k\rho|Y_{k,i}-x^*_i|\\
%&\le  \sum_{k=m}^{n} \bigl[\prod_{_{\{j:k<j\le n\}}}(1-\tau_j)\bigr]\psi_{k,i}\tau_k\rho|(Y_{k,i}-x^*_i)|\\
&\le \sum_{k=m}^{n} \bigl[\prod_{_{\{j:k<j\le n\}}}(1-\psi_{j,i}\tau_j)\bigr]\tau_k\rho\theta_m \\
&\le \rho\theta_m \\
\end{split}\end{equation}   
Hence for all $m$, $\limsup_{n \ap \infty}| V_{m,n,i}|\le 
\rho\theta_m$ and as a consequence
\begin{equation}\label{B80}
\limsup_{n \ap \infty}\nmi{ V_{m,n}}\le 
\rho\theta_m.\end{equation}
Combining \eqref{R43} along with \eqref{B75}, \eqref{B78} and  \eqref{B80}, we get for any $m\ge m_\delta$ 
\[
\limsup_{n \ap \infty}\nmi{Y_{n+1}-x^*}\le \rho\theta_m +\delta.\]
Now taking limit as $m \ap \infty$ on the RHS above and then limit as $\delta\downarrow 0$, we get
\[\limsup_{n \ap \infty}\nmi{Y_{n+1}-x^*}\le \rho\theta^*.\]
From the definition of $\theta^*$, it follows that  $\limsup_{n \ap \infty}\nmi{Y_{n+1}-x^*}=\theta^*$ and hence we have shown
\begin{equation}\label{B90}  
\theta^*\le \rho\theta^*.
\end{equation} 
Since $\rho<1$ and $0\le\theta^*\le \gamma^*<\infty$, \eqref{B90} implies that $\theta^*=0$. This completes the proof of Theorem   \ref{Main-SGD}.
\end{proof}

Now we present a proof of Theorem \ref{GSLLN-details}.

\begin{proof}
\textbf{Of Theorem \ref{GSLLN-details}}.
We start by
noting that under conditions (H1) as well as the case $\alpha=2$ under (H5),
the conditions of Theorem \ref{GSLLN} are satisfied with $A_{n,i}^c=\es$
(the empty set).
Condition \eqref{G1} holds trivially and \eqref{G2a} holds because of
the condition on $\{W_n\}$ that $E(W_n)=0$ under  (H1) and $\{W_n\}$ being
a martingale difference sequence under $(H5)$.
Similarly, \eqref{G3} holds in view of \eqref{L1} under (H1) and \eqref{L5}
under (H5), in the case $\al = 2$.
Hence GSLLN holds. 
 
Let us now come to (H5) for any $\alpha\in (1,2)$. Let us define $A_{n,i}=\{\beta_n|W_{n,i}|\le 1\}$. Now $\P(A_{n,i}^c)=\P(\beta_n^\alpha|W_{n,i}|^\alpha>1)$ and thus $\P(A_{n,i}^c)\le \beta_n^\alpha E(|W_{n+1,i}|^\alpha)\le  \beta_n^\alpha\nu_{\alpha,n+1}$. Hence
  $\sum_n\P(A_{n,i}^c)<\infty$ in view of \eqref{L5} showing that \eqref{G1} holds in this case. Since $\{W_n:n\ge 1\}$ is a martingale difference sequence, we have
  \[ V_{n+1,i}= E[W_{n+1,i}\Ind_{A_{n+1,i}}\mid \F_n]= -   E[W_{n+1,i}\Ind_{A_{n+1,i}^c}\mid \F_n]\]
and hence
  \begin{align*}
\beta_n  E\bigl[|V_{n+1,i} |\bigr]&=\beta_n  E\bigl[| E[W_{n+1,i}\Ind_{A_{n+1,i}^c}\mid \F_n]|\bigr] \\
  &\le \beta_n  E\bigl[ E[|W_{n+1,i}|\Ind_{A_{n+1,i}^c}\mid \F_n]\bigr] \\
  &\le   E\bigl[ \beta_n|W_{n+1,i}|\Ind_{A_{n+1,i}^c}\bigr]\\
  &\le   E\bigl[ \beta_n^\alpha|W_{n+1,i}|^\alpha\Ind_{A_{n+1,i}^c}\bigr]
 \end{align*}
 since $a^\alpha>a$ when $a>1$ and $1<\alpha$. Thus
 \[ E[ \sum_n\beta_n|V_{n+1,i} |]\le  E[\sum_n\beta_n^\alpha|W_{n+1,i}|^\alpha]<\infty\]
 in view of the assumption \eqref{L5}. Thus \eqref{G2} holds. Coming to the third condition, using
  $a^\alpha\ge a^2$ when $0<a\le 1$ and $1<\alpha< 2$, we have
 \begin{align*}
  \sum_n\beta_n^2| E(|V_{n+1,i}|^2 )|\le &  \sum_n\beta_n^\alpha E(|W_{n+1,i}|^\alpha\Ind_{A_{n+1,i}})\\
 \le & \sum_n\beta_n^\alpha E(|W_{n+1,i}|^\alpha) .
 \end{align*}
Thus \eqref{L5} implies \eqref{G3}.
Thus GSLLN holds if (H5) holds. Coming to (H4), it is easy to see that
$U_n=W_n-E(W_n)$ is a martingale difference sequence and that $\{U_n:n\ge 0\}$
satisfies (H5) (with the same $\alpha$.
Thus $\{U_n:n\ge 0\}$ satisfies GSLLN for the chosen step size
$\{\beta_n:n\ge 0\}$ since $\mu_n$ converges to 0, see part (i)  of
Lemma \ref{deterministic}.  Thus GSLLN holds under (H4).
 
Let us now look at (H2). This includes the case $\alpha=1$ while in (H5), we need $1<\alpha\le 2$. Moreover,  $\beta_n =n^{-\frac{1}{\alpha}}$  satisfies (L2) but not (L5).  
 
  Let us define $A_{n+1,i}=\{|W_{n+1,i}|^\alpha\le n\}$. Since $\{W_n:n\ge 1\}$ are i.i.d. we have
  \[\P(A_{n+1,i}^c)=\P(|W_{n+1,i}|^\alpha> n)= \P(|W_{1,i}|^\alpha> n).\]
Since $ E(|W_{1,i}|^\alpha)<\infty$, it follows that $\sum_n \P(|W_{1,i}|^\alpha> n)<\infty$ and hence \eqref{G1} holds. 

Since $\{W_n:n\ge 1\}$ are i.i.d. and $ E[W_{1,i}]=0$, we have
\[V_{n+1,i}= E[W_{n+1,i}\Ind_{A_{n+1,i}}\mid \F_n]= E[W_{n+1,i}\Ind_{A_{n+1,i}}]= E[W_{1,i}\Ind_{\{ |W_{1,i}|^\alpha\le n\}}]=- E[W_{1,i}\Ind_{\{ |W_{1,i}|^\alpha> n\}}].\]
Thus 
\[|V_{n+1,i}|\le  E[|W_{1,i}|\Ind_{\{ |W_{1,i}|^\alpha> n\}}]\le  E[|W_{1,i}|^\alpha\Ind_{\{ |W_{1,i}|^\alpha> n\}}].\]
Since $ E[|W_{1,i}|^\alpha]<\infty$ and  we conclude $|V_{n+1,i}|$ converges to zero and hence \eqref{G2a} holds. 

Coming to the \eqref{G3}, we have (recall that  $1\le \alpha<2$)
 \begin{align*}
 \sum_{n=0}^\infty \beta_n^2 E[|W_{n+1,i}|^2\Ind_{A_{n+1,i}}]&\le C_3+  \sum_{n=2}^\infty D^2n^{-\frac{2}{\alpha}} E[|W_{1,i}|^2\Ind_{\{|W_{1,i}|^\alpha\le  n\}}]\\
&=C_3+   D^2 E[(|W_{1,i}|^2\sum_{n=2}^\infty n^{-\frac{2}{\alpha}}\Ind_{\{|W_{1,i}|^\alpha\le  n\}}]\\
\end{align*}
\vskip -11mm 
Noting that for $\theta>1$ and $T>0$ we have
\bd
\sum_{n=2}^\infty n^{-\theta}\Ind_{\{T\le  n\}}\le 2T^{1-\theta} ,
\ed
we can conclude that
 \begin{align*}
 \sum_{n=0}^\infty \beta_n^2 E[|W_{n+1,i}|^2\Ind_{A_{n+1,i}}]
%&\le C_3+  \sum_{n=2}^\infty \beta_n^2 E[|W_{1,i}|^2\Ind_{\{|W_{1,i}|^\alpha\le  n\}}]\\
%&\le C_3+ D^2 E[|W_{1,i}|^2 \sum_{n=2}^\infty \beta_n^2\Ind_{\{|W_{1,i}|^\alpha\le  n\}}]\\
&\le C_3+ 2 D^2 E[(|W_{1,i}|^2(|W_{1,i}|^\alpha)^{(1-\frac{2}{\alpha})}\\
&=C_3+ 2 D^2 E[(|W_{1,i}|^\alpha)]\\
&<\infty
 \end{align*}
 in view of the assumption that  $ E(\nmm{W_1}^\alpha)<\infty$
 and hence \eqref{G3} follows.
Thus the conclusion that GSLLN holds under (H2) follows from Theorem \ref{GSLLN2}.

It remains only to show that conclusion is valid if (H3) holds.
For this purpose, we define,
\bd
A_{n,i}= \left\{\frac{|W_{n,i}|}{({\log(1+|W_{n,i}|)})^\delta}\le n
\right\} , \fa n \ge 1 .
\ed
Since  
\bd
E[\frac{|W_{n,i}|}{({\log(1+|W_{n,i}|)})^\delta}]<\infty \fa i ,
\ed
it follows that $\sum_i\P( A_{n+1,i})<\infty$ and hence \eqref{G1} holds.
Since   $\{W_n: n\ge 1\}$ are i.i.d.\ with symmetric distribution
(that is, $W_1$ and $-W_1$ have the same distribtuion),
it follows that $W_{n,i}\Ind_{A_{n+1,i}}$ has symmetric distribution for each
$i$; hence \eqref{G2} holds trivially.
Coming to \eqref{G3}, it remains to show that
\bd
\sum_{n=0}^\infty \beta_n^2 E[|W_{n+1,i}|^2\Ind_{A_{n+1,i}}]<\infty
\ed
In view of \eqref{L3}, it suffices to prove that
\be\label{Rx1}  
\sum_{n=3}^\infty \frac{1}{n^2(\log(1+n))^{2\delta}} E[|W_{1,i}|^2\Ind_{\{\frac{|W_{1,i}|}{(\log(1+|W_{1,i}|))^\delta}\le n\}}]<\infty.
\ee
As seen in Lemma \ref{Log},
\bd
\log(1+|W_{1,i}|)\le 3 \log(1+n) \mbox{ on the set }
\left\{\frac{|W_{1,i}|}{(\log(1+|W_{1,i}|))^\delta}\le n \right\} .
\ed
Thus
\bd
\frac{1}{(\log(1+|W_{1,i}|))^{2\delta}}\ge \frac{1}{9(\log(1+n))^{2\delta}}.
\ed
Hence to prove \eqref{Rx1}, it suffices to prove that
\be\label{Rx2}  
 \sum_{n=3}^\infty \frac{1}{n^2} E[\frac{|W_{1,i}|^2}{({\log(1+|W_{1,i}|)})^{2\delta}}
\Ind_{\left\{\frac{|W_{1,i}|}{(\log(1+|W_{1,i}|))^\delta}\le n\right\}}]<\infty.
\ee
Since $|W_{1,i}|/(\log(1+|W_{1,i}|))^\delta$ is integrable, \eqref{Rx2} follows as seen above in the proof under condition (H2) for $\alpha=1$. 
\end{proof}

\begin{remark}\label{remark}
It should be noted that under (H1), (H4) and (H5) we have invoked Theorem
\ref{GSLLN} and hence we can assert that
\be\label{G7x}
\sum_{n=0}^\infty \beta_nW_{n+1,i}\;\;\text{converges}\;\;\;\forall i.
\ee
holds.
Under (H2) if the (common) distribution of $\{W_n:n\ge 1\}$  is symmetric,
then  it is clear that \eqref{G2} trivially holds and again we can invoke
Theorem \ref{GSLLN}  instead of Theorem \ref{GSLLN2} and again assert that
\eqref{G7x} holds. 
Under (H3) in any case, the common distribution is assumed to be symmetric and hence \eqref{G7x} holds.
\end{remark}

Next, we present a proof of  Theorem \ref{GSLLN-SGD}.
\begin{proof}
\textbf{Of Theorem \ref{GSLLN-SGD}.}
Let us write $\theta_n = \eta_n/c_n$.
Recall that $M_n=\frac{1}{2}(\Mp_{n} - \Mpp_{n})$.
It is easy to see that if  $\{\Mp_{n},\Mpp_{n}:n\ge 1\}$ satisfy (K1),
then $\{M_n: n\ge 1\}$ and $\{\theta_n:n\ge 0\}$ satisfy (H1).
The same holds for (K2) or (K3), in which case $\{M_n: n\ge 1\}$
and $\{\theta_n:n\ge 0\}$ satisfy (H2) or (H3) respectively. 
Similarly, if (K4) or (K5) hold, then again  $\{M_n: n\ge 1\}$ and
$\{\theta_n:n\ge 0\}$ satisfy (H4) or (H5) respectively. 
Since $M_n$ by definition here have a symmetric distribution under
(H1), (H2) and (H3), Theorem \ref{GSLLN-details} yields that
(see also Remark \ref{remark} above)
\bd
\sum_{n=0}^\infty M_{n+1}\theta_n \text{ converges almost surely}.
\ed
 Noting that $W_{n+1}\eta_n=M_{n+1}\theta_n$,  we conclude that
\bd
\sum_{n=0}^\infty W_{n+1}\eta_n \text{ converges almost surely} .
\ed
and hence $\{W_n:n\ge 1\}$ satisfies GSLLN at the rate $\{\eta_n:n\ge 1\}$.
As a consequence, if in addition, \eqref{V1}, \eqref{V2}, \eqref{V3},  
and \eqref{V3x} are satisfied, then the
Stochastic Gradient Descent Theorem \ref{Main-SGD} holds.
\end{proof}

% \section{Comments:}

\section{Conclusions and Future Research}\label{sec:Conc}

In this paper, we have introduced a new approach to proving the convergence
of the Stochastic Approximation (SA)
and the Stochastic Gradient Descent (SGD) algorithms.
The new approach is based on a concept called GSLLN (Generalized Strong
Law of Large Numbers), which extends the traditional SLLN.
Using this concept, we provide sufficient conditions for convergence,
which effectively decouple the properties of the function whose zero
we are trying to find, from the properties of the measurement errors (noise
sequence).
The new approach provides an alternative to the two widely used approaches,
namely the ODE approach and the martingale approach, and also permits
a wider class of noise signals than either of the two known approaches.
In particular, the ``noise'' or measurement error \textit{need not}
have a finite second moment, and under suitable conditions, not even
a finite mean.
By adapting this method of proof, we also derive sufficient conditions
for the convergence of zero-order SGD, wherein the stochastic gradient
is computed using $2d$ function evaluations, but
no gradient computations.
The sufficient conditions derived here are the weakest to date,
thus leading to a considerable expansion of the applicability of SA and SGD
theory.

While the result on the convergence of the SGD algorithm is novel
in terms of its breadth, it is not practically useful unless the dimension
of the problem $d$ is quite small.
This is because it requires $2d$ function evaluations.
The state of the art in SGD is an approach known as SPSA (Simultaneous
Perturbation Stochactic Approximation), which requires
\textit{only two} function evaluations, irrespective of the value of $d$.
This approach was introduced in \cite{Spall-TAC92}, with several 
improvements thereafter.
See \cite{Sadegh-Spall-TAC98} for an improved version of SPSA,
and \cite{Li-Xia-Xu-arxiv22} for a review of the literature on this topic.
The authors are currently investigating how the GSLLN approach can
be adjusted to reduce the number of function measurements to just two,
for every value of $d$.

% \bibliographystyle{plain}
% \bibliography{Addl,ML,Opt}

\end{document}